\newtheorem{thm}{Theorem}[section]
\newtheorem{question}[thm]{Question}
\newtheorem{proposition}[thm]{Proposition}
\newtheorem{corollary}[thm]{Corollary}
\newtheorem{theorem}[thm]{Theorem}
\newtheorem{lemma}[thm]{Lemma}
\theoremstyle{definition}
\newtheorem{example}[thm]{Example}
\newtheorem{remark}[thm]{Remark}
\newcommand{\Z}{\mathbb{Z}}
\newcommand{\wrho}{\widehat{\rho}}
\newcommand{\gens}{\mbox{gens}}
\title[On resurgence via asymptotic resurgence]{On resurgence via asymptotic resurgence}
\author[M.~DiPasquale]{Michael DiPasquale}     
\address{Michael DiPasquale\\     
	Department of Mathematics\\     
	Colorado State University}     
\email{Michael.DiPasquale@colostate.edu}
\urladdr{\url{https://midipasq.github.io/}}   
\author[B.~Drabkin]{Ben Drabkin}
\address{Ben Drabkin\\
Department of Mathematics\\
University of Nebraska-Lincoln}
\email{benjamin.drabkin@huskers.unl.edu}
\urladdr{\url{http://www.math.unl.edu/~bdrabkin2/}}
\begin{document}

\begin{abstract}
The resurgence and asymptotic resurgence of an ideal in a polynomial ring are two statistics which measure the relationship between its regular and symbolic powers.  We address two aspects of resurgence which can be studied via asymptotic resurgence.  First, we show that if an ideal has Noetherian symbolic Rees algebra then its resurgence is rational.  Second, we derive two bounds on asymptotic resurgence given a single known containment between a symbolic and regular power.  From these bounds we recover and extend criteria for the resurgence of an ideal to be strictly less than its big height recently derived by Grifo, Huneke, and Mukundan.  We achieve the reduction to asymptotic resurgence by showing that if the asymptotic resurgence and resurgence are different, then resurgence is a maximum instead of a supremum.
\end{abstract}

\thanks{The second author is partially supported by NSF grant DMS-1601024 and Epscor grant OIA-1557417.}
\subjclass[2010]{13F20, 
	13A15, 
14C20
}
\keywords{symbolic powers, containment problem, resurgence, asymptotic resurgence, symbolic Rees algebra}

\maketitle

\section{Introduction}

Suppose $I$ is an ideal in a polynomial ring $R=K[x_1,\ldots,x_\ell]$ over a field $K$.  There are different ways to take powers of $I$.  The ordinary power $I^r$ is the ideal generated by all $r$-fold products of elements of $I$; this retains many of the algebraic properties of $I$.  An alternative is the $s$-th \textit{symbolic power} of $I$, defined as $I^{(s)}=\bigcap_{P\in\mbox{\textup{Ass}(I)}} (I^sR_P\cap R)$, where $\textup{Ass}(I)$ is the set of associated primes of $I$.  The symbolic powers of $I$ retain its geometry; if $I$ is a radical ideal and $K$ is algebraically closed of characteristic zero, the Zariski-Nagata theorem shows that $I^{(s)}$ consists of all polynomials which vanish to order $s$ along the variety defined by $I$.

Symbolic and regular powers are at the center of the \textit{containment problem}.  The containment problem is to determine for which positive integers $r,s$ we have $I^{(s)}\subset I^r$.  A seminal result, which we refer to as \textit{uniform containment}, is that $I^{(hr)}\subset I^r$ if $h$ is the maximum height of an associated prime of $I$; this is proved in successively more generality in the papers of Ein, Lazarsfeld, and Smith~\cite{ELS01}, Hochster and Huneke~\cite{HH02}, and Ma and Schwede~\cite{MS18}.

To quantify the containment problem more precisely for individual ideals, Bocci and Harbourne introduced the notion of \textit{resurgence} in~\cite{BH10}, defined as
\[
\rho(I)=\sup\left\lbrace\frac{s}{r}\,\,\middle|\,\, I^{(s)}\not\subset I^r\right\rbrace.
\]
A coarsening of resurgence, called \textit{asymptotic resurgence}, was introduced by Guardo, Harbourne, and Van Tuyl in~\cite{GHV13}:
\[
\widehat{\rho}(I)=\sup\left\lbrace\frac{s}{r}\,\,\middle|\,\, I^{(st)}\not\subset I^{rt}\mbox{ for all } t\gg 0 \right\rbrace.
\]
The previous definitions and remarks give $\widehat{\rho}(I)\le \rho(I)\le h$, but in general the relationship between resurgence and asymptotic resurgence is somewhat mysterious.  

In this paper we leverage a characterization for asymptotic resurgence in terms of integral closures obtained by the first author with Francisco, Mermin, and Schweig~\cite{DFMS18} to study two problems concerning resurgence.  The crucial observation which allows us to pass from asymptotic resurgence to resurgence in both problems is Proposition~\ref{prop:Resurgence}, in which we prove that the supremum in the definition of resurgence can be replaced by a maximum if the asymptotic resurgence and resurgence are different.

The first problem we study is the rationality of resurgence.  Currently there is no example of an ideal whose resurgence is irrational, although a conjecture of Nagata~\cite{N59}, if resolved positively, would yield the existence of many ideals with irrational resurgence (see Remark~\ref{rem:IrrationalResurgence}).  These ideals have symbolic Rees algebras which are not finitely generated.
Conversely, we show in Theorem~\ref{thm:Main} that if the symbolic Rees algebra of an ideal is finitely generated then its resurgence is necessarily rational.  This is a natural extension of the corresponding result for monomial ideals, proved in~\cite{DFMS18}.  

The second problem we study is that of \textit{expected} resurgence introduced in~\cite{GHM19}; an ideal $I$ has expected resurgence if its resurgence is strictly less than its big height.  The uniform containment result of~\cite{ELS01,HH02,MS18} implies that the resurgence of an ideal is less than \textit{or equal to} its big height, but there is no example of an ideal whose resurgence is equal to its big height.  Ideals with expected resurgence satisfy asymptotic versions of containments between their symbolic and ordinary powers that may not hold for small powers.  In particular, an ideal with expected resurgence satisfies the \textit{stable Harbourne conjecture}, namely $I^{(hr-h+1)}\subset I^r$ for all $r\gg 0$ (see~\cite{G18}).  Harbourne conjectured that $I^{(hr-h+1)}\subset I^r$ holds for $r\ge 1$ ~\cite{BDHKKSS09,HH13}; there are now several fascinating counterexamples to this conjecture (the first recorded in~\cite{DST13}), but none to the stable version.

As observed by Harbourne, Kettinger, and Zimmitti in~\cite{HKZ20}, a consequence of Proposition~\ref{prop:Resurgence} is that $I$ has expected resurgence if and only if the \textit{asymptotic} resurgence of $I$ is strictly less than its big height.  Thus we can study expected resurgence using asymptotic resurgence.  Inspired by the methods of Grifo~\cite{G18} and Grifo, Huneke, and Mukundan~\cite{GHM19}, we apply a refined uniform containment due to Johnson~\cite{J14} to give two upper bounds on asymptotic resurgence based on a single containment.  From these we deduce two criteria for expected resurgence which recover and extend those in~\cite{GHM19}.
%


The structure of this note is as follows.  In Section~\ref{sec:Resurgence} we give some background, recall the main results of~\cite{DFMS18}, and show in Proposition~\ref{prop:Resurgence} that if $\rho(I)<\wrho(I)$ then $\rho(I)$ is a maximum instead of a supremum.  In Section~\ref{sec:main} we prove that the resurgence of an ideal with Noetherian symbolic Rees algebra is rational.  We first show that the asymptotic resurgence of an ideal with Noetherian symbolic Rees algebra is rational, then conclude the general result from Proposition~\ref{prop:Resurgence}.  In Section~\ref{sec:Expected} we give two bounds on asymptotic resurgence and deduce subsequent criteria for expected resurgence; we use one of these criteria to show that squarefree monomial ideals have expected resurgence.  We conclude with several related remarks in Section~\ref{sec:remarks}.


\section{Resurgence and asymptotic resurgence}\label{sec:Resurgence}
In this section we review some tools which we use in later sections, including results from~\cite{DFMS18}.  An element $f\in R$ is \textit{integral} over an ideal $I$ if $x=f$ is a solution of an equation of the form $x^k+r_1x^{k-1}+\cdots+r_k=0,$ where $r_i\in I^i$ for $i=0,\ldots,k$ and $k$ is a positive integer.  The integral closure of $I$, written $\overline{I}$, is the set of all elements $f\in R$ which are integral over $I$.  If $I=\overline{I}$ then we say $I$ is \textit{integrally closed}.

If $\mathbf{K}$ is a field, a \textit{discrete valuation} on $\mathbf{K}$ is a homomorphism $\nu:\mathbf{K}^*\to\Z$ satisfying $\nu(x+y)\ge \min\{\nu(x),\nu(y)\}$.  If $\mathbf{K}$ is the fraction field of the polynomial ring $R=K[x_1,\ldots,x_\ell]$ then a valuation is determined by its values on $R$, so we abuse terminology by referring to valuations on $R$ instead of its field of fractions.  An $R$-valuation is a discrete valuation which is non-negative on $R$ -- that is $\nu(f)\ge 0$ for all $f\in R$.  If $I$ is an ideal of $R$ and $\nu$ is an $R$-valuation we write $\nu(I)$ for the minimum value which $\nu$ takes on $I$.  We say $\nu$ is \textit{supported} on $I$ if $\nu(I)\ge 1$.  Valuations give a very useful membership test for integral closures, called the valuative criterion for integral closure: $f\in\overline{I}$ if and only if $\nu(f)\ge\nu(I)$ for every $R$-valuation $\nu$ (clearly it suffices to take $R$-valuations supported on $I$).  We get from this the ideal membership test: $J\subset \overline{I}$ if and only if $\nu(J)\ge \nu(I)$ for every $R$-valuation $\nu$.

In~\cite{DFMS18} it is shown that the limit
$
\widehat{\nu}(I):=\lim_{s\to \infty} \frac{\nu(I^{(s)})}{s}
$
exists for any $R$-valuation.  These constants generalize the \textit{Waldschmidt constant} $\widehat{\alpha}(I)$, where $\alpha$ is the valuation defined by $\alpha(f)=\deg(f)$ and $\deg(f)$ is the total degree of $f$.

The \textit{Rees valuations} of an ideal $I$ are finitely many valuations $\nu_1,\ldots,\nu_t$ so that $f\in \overline{I^r}$ (for \textit{any} positive integer $r$) if and only if $\nu_i(f)\ge r\nu_i(I)$ for $i=1,\ldots,t$.  For more on Rees valuations see~\cite[Chapter~10]{IntegralClosure}.  We will not use any details of Rees valuations beyond their existence and the fact that there are finitely many of them.  In~\cite{DFMS18} the following characterization of asymptotic resurgence is given.

\begin{thm}{\cite[Theorem~4.10]{DFMS18}}\label{thm:AsymptoticResurgenceByIntegralClosures}
Let $I$ be an ideal and let $\nu_1,\ldots,\nu_r$ be the set of Rees valuations for $I$.  Then
\[
\widehat{\rho}(I)=\max\limits_i\left\lbrace\frac{\nu_i(I)}{\widehat{\nu}_i(I)}\right\rbrace=\sup\limits_\nu\left\lbrace\frac{\nu(I)}{\widehat{\nu}(I)}\right\rbrace,
\]
where the supremum is taken over discrete $R$-valuations which are supported on $I$.
\end{thm}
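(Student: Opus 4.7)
The plan is to convert asymptotic resurgence into a statement about integral closures via the auxiliary quantity $\sigma(I) := \sup\{s/r \mid I^{(s)} \not\subset \overline{I^r}\}$, then evaluate $\sigma(I)$ using the Rees valuations, and finally check that allowing arbitrary $R$-valuations does not change the answer.

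First I would show $\widehat{\rho}(I) = \sigma(I)$. The inequality $\sigma(I) \le \widehat{\rho}(I)$ comes for free: if $I^{(s)} \not\subset \overline{I^r}$, the valuative criterion produces a Rees valuation $\nu_i$ with $\nu_i(I^{(s)}) < r\nu_i(I)$, and the subadditivity $\nu_i(I^{(st)}) \le t\nu_i(I^{(s)})$ (which follows from $I^{(s)}\cdot I^{(s(t-1))}\subset I^{(st)}$) gives $\nu_i(I^{(st)}) < rt\nu_i(I)=\nu_i(I^{rt})$, so $I^{(st)} \not\subset I^{rt}$ for every $t\ge 1$. The reverse inequality $\widehat{\rho}(I) \le \sigma(I)$ is the delicate part: from $s/r > \sigma(I)$ one initially knows only that $I^{(st)} \subset \overline{I^{rt}}$ for every $t$. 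I would bridge this using a Brian\c{c}on--Skoda-style uniform containment $\overline{I^{k+c}} \subset I^k$, together with the observation that $st/(rt+c) > \sigma(I)$ for $t$ large enough, to obtain $I^{(st)} \subset \overline{I^{rt+c}} \subset I^{rt}$ for $t\gg 0$, which forces $s/r$ out of the set defining $\widehat{\rho}(I)$.

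Once the two are identified, $\sigma(I)$ is computed directly from the Rees valuations: $I^{(s)} \not\subset \overline{I^r}$ iff some $\nu_i$ satisfies $\nu_i(I^{(s)}) < r\nu_i(I)$, equivalently $s/r < \nu_i(I)/(\nu_i(I^{(s)})/s)$. Taking the supremum over $(s,r)$ and using that $\nu_i(I^{(s)})/s$ converges (by Fekete's lemma applied to the subadditive sequence $\nu_i(I^{(s)})$) to its infimum $\widehat{\nu}_i(I)$ yields $\sigma(I) = \max_i \nu_i(I)/\widehat{\nu}_i(I)$. To pass to general $R$-valuations supported on $I$, I would observe that for any such $\nu$, applying $\nu$ to an integral dependence relation gives $\nu(f) \ge r\nu(I)$ for every $f \in \overline{I^r}$; hence exactly the same argument produces $\sigma(I) \ge \nu(I)/\widehat{\nu}(I)$, and so $\sigma(I) \ge \sup_\nu \nu(I)/\widehat{\nu}(I)$. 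The reverse $\sup_\nu \ge \max_i$ is automatic because the Rees valuations are themselves admissible, so all three quantities agree.

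The main obstacle is the direction $\widehat{\rho}(I) \le \sigma(I)$ in step one: non-containment modulo integral closure is strictly stronger than non-containment modulo the power itself, and closing that gap requires a Brian\c{c}on--Skoda-type uniform bound. Once that is in place, everything else is routine manipulation with the finite set of Rees valuations and a limit argument for $\widehat{\nu}(I)$.
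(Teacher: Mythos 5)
Your argument is correct and is essentially the route taken in [DFMS18], which the present paper only cites: introduce the integral-closure version $\sigma(I)=\sup\{s/r : I^{(s)}\not\subset\overline{I^r}\}$, use a Brian\c{c}on--Skoda-type bound $\overline{I^{k+c}}\subset I^k$ to identify $\widehat{\rho}(I)=\sigma(I)$, and then evaluate $\sigma(I)$ via the finitely many Rees valuations together with the Fekete limit $\widehat{\nu}_i(I)=\inf_s \nu_i(I^{(s)})/s$. The only small point worth spelling out is that Rees valuations are automatically supported on $I$ (so $\nu_i(I)\ge 1$ and they lie in the admissible class), which is what makes the final comparison $\max_i \le \sup_\nu \le \sigma(I)$ close up.
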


We prove a variation of~\cite[Proposition~4.19]{DFMS18} which shows that, when $\widehat{\rho}(I)<\rho(I)$, $\rho(I)$ is a maximum of finitely many fractions of the form $\{\frac{s}{r}:I^{(s)}\not\subset I^r\}$.  For an ideal $I\subset R$, write $b=b(I)$ for the minimum of the integers $k$ satisfying $\overline{I^{r+k}}\subset I^r$ for all $r\ge 1$.  The Brian\c{c}on-Skoda theorem~\cite[Theorem~13.3.3]{IntegralClosure} guarantees that $b\le n-1$, where $n$ is the number of variables of $R$.

\begin{proposition}\label{prop:Resurgence}
	Suppose that $I\subset R$ is an ideal and suppose $\widehat{\rho}(I)<\rho(I)$.  Then there exist positive integers $s_0,r_0$ so that $I^{(s_0)}\not\subset I^{r_0}$ and $\frac{s_0}{r_0}>\widehat{\rho}(I)$.  Put
	\[
	N=\frac{b\widehat{\rho}(I)}{s_0/r_0-\widehat{\rho}(I)}.
	\]
	Then
	\[
	\rho(I)=\max\limits_{\substack{2\le r< N,\\[3 pt] r+1\le s< (r+b)\widehat{\rho}(I)}}\left\lbrace \frac{s}{r}\,\,\middle|\,\, I^{(s)}\not\subset I^r\right\rbrace.
	\]
	At worst, we may take $b=(n-1)$ in the definition of $N$ by the Brian\c{c}on-Skoda theorem.  Consequently if $\widehat{\rho}(I)<\rho(I)$ then $\rho(I)$ is rational.
\end{proposition}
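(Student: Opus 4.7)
The plan is to use Theorem~\ref{thm:AsymptoticResurgenceByIntegralClosures} together with the Brian\c{c}on--Skoda containment $\overline{I^{r+b}}\subset I^r$ to confine every pair $(s,r)$ with $I^{(s)}\not\subset I^r$ and $s/r>\widehat{\rho}(I)$ to a finite set of integer points. Since $\rho(I)>\widehat{\rho}(I)$, every sequence of ratios approaching $\rho(I)$ must eventually land in this range, so $\rho(I)$ is attained as a maximum over a finite set and is therefore rational. Existence of a witness $(s_0,r_0)$ with $I^{(s_0)}\not\subset I^{r_0}$ and $s_0/r_0>\widehat{\rho}(I)$ is immediate from $\rho(I)$ being a supremum that strictly exceeds $\widehat{\rho}(I)$.

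\textbf{Key implication and bounds.} The technical heart is the implication $s/r>\widehat{\rho}(I)\Rightarrow I^{(s)}\subset\overline{I^r}$. For each Rees valuation $\nu_i$ of $I$, Theorem~\ref{thm:AsymptoticResurgenceByIntegralClosures} gives $\widehat{\nu}_i(I)\ge \nu_i(I)/\widehat{\rho}(I)$, while subadditivity of $s\mapsto \nu_i(I^{(s)})$ (from $I^{(s)}I^{(t)}\subset I^{(s+t)}$ via Fekete's lemma) yields $\nu_i(I^{(s)})\ge s\widehat{\nu}_i(I)$; combining, $\nu_i(I^{(s)})\ge s\nu_i(I)/\widehat{\rho}(I)>r\nu_i(I)$, and the Rees-valuation criterion for integral closure forces $I^{(s)}\subset\overline{I^r}$. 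Applying this with $r$ replaced by $r+b$ and combining with Brian\c{c}on--Skoda, one sees $I^{(s)}\not\subset I^r$ implies $s\le(r+b)\widehat{\rho}(I)$. To bound $r$ I restrict to pairs whose ratio reaches at least $s_0/r_0$: combining $s\le(r+b)\widehat{\rho}(I)$ with $s\ge(s_0/r_0)r$ produces $r(s_0/r_0-\widehat{\rho}(I))\le b\widehat{\rho}(I)$, i.e., $r\le N$. The auxiliary constraints $r\ge 2$ and $s\ge r+1$ drop out from $I^{(s)}\subset I$ for $s\ge 1$ and from $s/r>\widehat{\rho}(I)\ge 1$.

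\textbf{Finishing and main obstacle.} Letting $M$ denote the maximum over the indicated finite set, the equality $\rho(I)=M$ follows from the supremum property: were $M<\rho(I)$, I could find $(s,r)$ with $I^{(s)}\not\subset I^r$ and $s/r>M\ge s_0/r_0$, forcing $r<N$ strictly and placing $(s,r)$ inside the range, contradicting maximality. Rationality of $\rho(I)$ then follows since $M$ is a maximum of finitely many rationals. The main conceptual obstacle is the passage from the Rees-valuation formula of Theorem~\ref{thm:AsymptoticResurgenceByIntegralClosures} to the clean containment $I^{(s)}\subset\overline{I^r}$ when $s/r>\widehat{\rho}(I)$; once that and Brian\c{c}on--Skoda are in place, the remainder is a short combinatorial argument about integer pairs in a bounded region.
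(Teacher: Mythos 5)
Your proof is correct and follows essentially the same route as the paper: confine the pairs $(s,r)$ with $I^{(s)}\not\subset I^r$ and $s/r$ above the witness ratio $s_0/r_0$ to a finite box via the Brian\c{c}on--Skoda bound and the containment $I^{(s)}\subset\overline{I^r}$ when $s/r$ exceeds $\widehat{\rho}(I)$. The only difference is that you re-derive that containment from Theorem~\ref{thm:AsymptoticResurgenceByIntegralClosures} using the Rees-valuation criterion and Fekete subadditivity of $s\mapsto\nu(I^{(s)})$, whereas the paper simply cites it as~\cite[Lemma~4.12]{DFMS18}; also note that your argument in fact works whenever $s/r\ge\widehat{\rho}(I)$ (the valuation estimate only needs $\ge$), so you actually get the strict bound $s<(r+b)\widehat{\rho}(I)$ as in the proposition, rather than the weak inequality $s\le(r+b)\widehat{\rho}(I)$ you wrote down.
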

\begin{proof}
	If $\widehat{\rho}(I)<\rho(I)$, it is clear that there must exist positive integers $s_0,r_0$ so that $I^{(s_0)}\not\subset I^{r_0}$ and $\frac{s_0}{r_0}>\widehat{\rho}(I)$.  Now set $b=b(I)$ and suppose $I^{(s)}\not\subset I^r$.  Then, since $\overline{I^{r+b}}\subset I^r$, $I^{(s)}\not\subset \overline{I^{r+b}}$.  By~\cite[Lemma~4.12]{DFMS18}, $\frac{s}{r+b}<\wrho(I)$.  Re-arranging, we get
	\begin{equation}\label{eq:rhoineq}
	\frac{s}{r}<\left(1+\frac{b}{r}\right)\widehat{\rho}(I).
	\end{equation}
	It follows that if
	\[
	\frac{s_0}{r_0}\ge\left(1+\frac{b}{r}\right)\widehat{\rho}(I),
	\]
	then $\frac{s}{r}<\frac{s_0}{r_0}$.  Solving for $r$ in the above inequality and using our assumption that $s_0/r_0>\widehat{\rho}(I)$, we see that if
	\[
	r\ge\frac{b\wrho(I)}{s_0/r_0-\wrho(I)}=N
	\]
	then $\frac{s}{r}<\frac{s_0}{r_0}\le \rho(I)$.  Hence to determine $\rho(I)$ it suffices to consider $r< N$.  
	Since $\rho(I)\ge 1$, Inequality~\eqref{eq:rhoineq} implies that we only need to consider values of $s$ satisfying $r\le s< (r+b)\widehat{\rho}(I)$.  Clearly we do not need to consider the cases $r=1$ and $r=s$.  This proves the proposition.
\end{proof}

\begin{remark}
Proposition~\ref{prop:Resurgence} is similar to Proposition 4.1.3 of Denkert's thesis \cite{Denk}. Denkert's result implies that if there exists some $c$ such that $I^{(ct)}=(I^{(c)})^t$ for all $t\in\mathbb{N}$ (i.e. the symbolic Rees algebra of $I$ is Noetherian), and if $\wrho(I)< \rho(I)$, then $\rho(I)$ is the maximum of a finite number of ratios $\frac{m}{r}$ where $I^{(m)}\not\subseteq I^r$ and is thus rational.
\end{remark}

If both $\widehat{\rho}(I)$ and $b(I)$ are known and $I^{(s_0)}\not\subset I^{r_0}$ for some $\frac{s_0}{r_0}>\wrho(I)$, then Proposition~\ref{prop:Resurgence} can be used to effectively determine $\rho(I)$ by checking a finite number of containments using the \textsc{SymbolicPowers} package~\cite{SymbolicPowers} in the computer algebra system \textsc{Macaulay2}~\cite{M2}.  Moreover, if $I$ is a monomial ideal then the integer $b(I)$ can be determined using \textsc{Normaliz}~\cite{Normaliz} (we use the \textsc{Macaulay2} interface to \textsc{Normaliz}).  We illustrate this in the next example.

\begin{example}\label{ex:resurgencefromasymptoticresurgence}
Consider the three squarefree monomial ideals listed below (the first is in the polynomial ring with seven variables $a,\ldots,g$ and the next two are in the polynomial ring with ten variables $a,\ldots,j$).
\[
\begin{array}{rl}
I= & \langle abd,bce,cdf,aef,acg,deg,bfg\rangle\\
J= & \langle abc,ade,bdf,cef,agh,bgi,chi,dgj,ehj,fij\rangle\\
K= & \langle cefg, bdfh, afgh, adei, begi, cdhi, abcj, cdgj, behj, afij\rangle
\end{array}
\]
The ideals $I,J,K$ appear as Examples~3.1, 3.2, and 3.3, respectively, in~\cite{DFMS18}.  Using the interface to the package \textsc{Normaliz} in \textsc{Macaulay2}, we can compute that the integral closure of the Rees algebras $R[It],R[Jt],$ and $R[Kt]$ are generated in degrees $2,3,$ and $3$, respectively.  It follows that $\overline{I^{r+1}}= I^{r-1}\overline{I^{2}},\overline{J^{r+1}}= J^{r-2}\overline{J^{3}}$, and $\overline{K^{r+1}}= K^{r-2}\overline{K^{3}}$ for all $r\ge 2$ (see~\cite[Proposition~5.2.5]{IntegralClosure}).  We can check directly in \textsc{Macaulay2} that $\overline{I^2}\subset I,\overline{J^3}\subset J^2$, and $\overline{K^3}\subset K^2$.  Thus $\overline{I^{r+1}}\subset I^r,\overline{J^{r+1}}\subset J^r,$ and $\overline{K^{r+1}}\subset K^r$ for $r\ge 1$, and $b(I)=b(J)=b(K)=1$.

In~\cite{DFMS18} it is computed that $\wrho(I)=9/7$ and $\wrho(J)=\wrho(K)=6/5$.  We can check directly that $I^{(3)}\not\subset I^2,J^{(4)}\not\subset J^3$, and $K^{(3)}\not\subset K^2$.  Let $N_I,N_J,N_K$ be the integers referred to as $N$ in Proposition~\ref{prop:Resurgence} for each of the ideals $I,J,K$.  We compute
\[
N_I=\frac{b(I)\wrho(I)}{3/2-\wrho(I)}=\frac{9/7}{3/2-9/7}=6
\]
Similarly we compute $N_J=9$ and $N_K=10$.  The only pair $(r,s)$ satisfying $2\le r<N_I=6$, $r+1\le s<(r+1)\wrho(I)=(r+1)9/7$, and $s/r>3/2$ is $(3,5)$.  However we can check directly $I^{(5)}\subset I^3$.  So $\rho(I)=3/2$ by Proposition~\ref{prop:Resurgence}.

Likewise the only pairs $(r,s)$ satisfying $2\le r<N_J=9,r+1\le s<(r+1)\wrho(J)=(r+1)6/5,$ and $s/r>4/3$ are $(2,3)$ and $(5,7)$.  We can check directly that $J^{(3)}\subset J^2$ and $J^{(7)}\subset J^5$, so $\rho(J)=4/3$.

Finally, there is no pair $(r,s)$ satisfying $2\le r<N_K=10,r+1\le s<(r+1)\wrho(K)=(r+1)6/5,$ and $s/r>3/2$.  Thus $\rho(K)=3/2$.
\end{example}

\section{The resurgence of an ideal with Noetherian symbolic Rees algebra is rational}\label{sec:main}
Recall that the \textit{symbolic Rees algebra} of an ideal $I$ is defined as
\[
R_s(I):=\bigoplus\limits_{s\ge 0} I^{(s)}t^s\subset R[t],
\]
where by convention we take $I^{(0)}=R$.  If $R_s(I)$ is Noetherian then all symbolic powers of $I$ can be written in terms of finitely many such powers, so it is natural to expect rationality of statistics measuring the relationship between regular and symbolic powers of $I$.  Nevertheless the proof we give is slightly technical, relying on the characterization of asymptotic resurgence in Theorem~\ref{thm:AsymptoticResurgenceByIntegralClosures} using Rees valuations.  Before giving the proof, we offer a discussion to indicate that some tool at the level of Rees valuations is needed to capture the rationality of resurgence of ideals with Noetherian symbolic Rees algebra.  In other words, we need an additional finiteness property beyond the Noetherian assumption for the symbolic Rees algebra.  Along the way we derive an upper bound on the resurgence of ideals with Noetherian symbolic Rees algebra.

At first glance one may wonder if we can arrive at a conclusion akin to Proposition~\ref{prop:Resurgence} when $R_s(I)$ is Noetherian.  That is, perhaps there exist constants $M,N\in\mathbb{N}$ so that
\begin{equation}\label{eq:finite}
\rho(I)=\max_{0\le s\le M,1\le r\le N}\left\lbrace\frac{s}{r}\,\,\middle|\,\, I^{(s)}\not\subset I^r \right\rbrace \mbox{ for some } M,N\in\mathbb{N}.
\end{equation}
However Equation~\eqref{eq:finite} does not typically hold even if $R_s(I)$ is Noetherian.  For instance, if $I$ is a complete intersection then $I^{(s)}\not\subset I^r$ if and only if $s<r$, hence
\[
\rho(I)=\sup\left\lbrace \frac{s}{r}\,\,\middle|\,\, 0\le s<r\right\rbrace=1,
\]
but clearly Equation~\eqref{eq:finite} does not hold.  More generally, it is shown in~\cite{DFMS18} that if $I$ is a \textit{normal} ideal (that is, all of its powers are integrally closed), then $\rho(I)=\widehat{\rho}(I)$ and $I^{(s)}\not\subset I^r$ if and only if $\frac{s}{r}$ is \textit{strictly less} than $\rho(I)$.  Thus it happens quite often that Equation~\ref{eq:finite} does not hold, even if we replace $I^r$ by its integral closure.

Next we consider another approach, relying only on the finiteness encoded in the Noetherian property of the symbolic Rees algebra.  If $I$ has Noetherian symbolic Rees algebra generated in degree at most $n$, then
\begin{equation}\label{eq:NoetherianSymbolicReesAlgebra}
I^{(s)}=\sum\limits_{y_1+2y_2+\cdots+ny_n=s} I^{y_1}(I^{(2)})^{y_2}\cdots (I^{(n)})^{y_n}.
\end{equation}
We use the containments for $I,I^{(2)},\ldots, I^{(n)}$ to predict some containments of $I^{(s)}$ as follows.  Define $\gamma_i$ to be the maximum integer with respect to the property $I^{(i)}\subset I^{\gamma_i}$ for $i=1,\ldots,n$.  That is $I^{(i)}\subset I^{\gamma_i}$ and $I^{(i)}\not\subset I^{\gamma_i+1}$.  Then it is clear that $I^{(s)}\subset I^{\gamma_s}$, where
\[
\gamma_s=\min\{y_1\gamma_1+\cdots+y_n\gamma_n: y_1+2y_2+\cdots +ny_n=s\},
\]
where $y_1,\ldots,y_n$ are non-negative integers.  Linear programming (details are given for a similar problem in Proposition~\ref{prop:waldgeneral}) yields the following proposition.

\begin{proposition}\label{prop:ResurgenceBound}
Suppose $I$ is an ideal of $R$ and $R_s(I)$ is generated in degrees at most $n$.  Let $\gamma_1,\ldots,\gamma_n$ be as above and suppose $v$ is the value of the following linear program (over the \textit{rationals}):
\begin{center}
	\begin{tabular}{ll}
		minimize & $\gamma_1y_1+\gamma_2y_2+\dots+\gamma_ny_n$ \\
		subject to & $y_i\ge 0$ for $i=1,\ldots,n$ \\
		and & $y_1+2y_2+3y_3+\dots ny_n= 1$. 
	\end{tabular}
\end{center}
Then $\rho(I)\le \frac{1}{v}$.
\end{proposition}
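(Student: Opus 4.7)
The plan is to convert the integer program defining $\gamma_s$ into the rational linear program in the statement, obtain a uniform lower bound $\gamma_s \ge sv$, and then read off the bound on $\rho(I)$ from the containments $I^{(s)} \subset I^{\gamma_s}$.

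First I would unpack the consequences of $R_s(I)$ being generated in degrees at most $n$. For any $s\ge 1$, every element of $I^{(s)}$ lies in $\sum I^{y_1}(I^{(2)})^{y_2}\cdots(I^{(n)})^{y_n}$, where the sum runs over nonnegative integers $y_i$ with $y_1+2y_2+\cdots+ny_n=s$. Applying the containments $I^{(i)}\subset I^{\gamma_i}$ coordinatewise, each summand is contained in $I^{y_1\gamma_1+\cdots+y_n\gamma_n}$, so $I^{(s)}\subset I^{\gamma_s}$ for
\[
\gamma_s \;=\; \min\bigl\{\,y_1\gamma_1+\cdots+y_n\gamma_n \,:\, y_i\in\Z_{\ge 0},\; y_1+2y_2+\cdots+ny_n=s\,\bigr\}.
\]

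Next I would relate $\gamma_s$ to $v$. The LP in the statement is exactly the rational relaxation of the integer program above, but with right-hand side $1$ instead of $s$. Because the feasible region is homogeneous under scaling (if $(y_i)$ is feasible for the right-hand side $1$, then $(sy_i)$ is feasible for right-hand side $s$, with objective value scaled by $s$), the rational optimum for the right-hand side $s$ is exactly $sv$. Since any integer feasible point is also rationally feasible, we get $\gamma_s \ge sv$ for every $s\ge 1$.

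Finally I would translate this into a resurgence bound. If $I^{(s)}\not\subset I^r$, then $r$ cannot be at most $\gamma_s$ (otherwise $I^{(s)}\subset I^{\gamma_s}\subset I^r$), so $r > \gamma_s \ge sv$, whence $s/r < 1/v$. Taking the supremum over all such pairs gives $\rho(I)\le 1/v$, as required. There is no real obstacle here; the only slightly subtle point is the clean LP-relaxation argument giving $\gamma_s\ge sv$, and this follows immediately from the homogeneity of the constraint $\sum iy_i = s$ under scaling.
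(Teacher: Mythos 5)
Your proof is correct and takes the same route the paper sketches just before the proposition: establish $I^{(s)}\subset I^{\gamma_s}$ from the generation of $R_s(I)$ in degrees at most $n$, observe that $\gamma_s$ is bounded below by the rational LP optimum $sv$ (by homogeneity of the constraint under scaling), and conclude $s/r<1/v$ whenever $I^{(s)}\not\subset I^r$. The paper leaves these details as a pointer to the analogous linear-programming computation in Proposition~\ref{prop:waldgeneral}, and your write-up fills them in as intended.
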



If the generating degree of the symbolic Rees algebra is known, then Proposition~\ref{prop:ResurgenceBound} can give an upper bound on the resurgence.  For instance, suppose $I$ is the cover ideal of a graph.  Then~\cite[Theorem~5.1]{HHT07} shows that $R_s(I)$ is generated in degree $2$.  If $I^{(2)}=I^2$, so $R_s(I)$ is equal to the Rees algebra of $I$, then Proposition~\ref{prop:Resurgence} gives $\rho(I)\le 1$, as expected.  If $I^{(2)}\neq I^2$, then the value of the linear program in Proposition~\ref{prop:ResurgenceBound} is $1/2$, so $\rho(I)\le 2$, recovering the uniform containment bound.


We now give a concrete illustration which shows that the constants $\gamma_s$ can be quite a bit less than the maximal integer $\lambda_s$ so that $I^{(s)}\subset I^{\lambda_s}$.  This illustrates that Proposition~\ref{prop:ResurgenceBound} may yield a strict upper bound on resurgence; in particular the rationality of $\rho(I)$ could not be deduced from the integers $\gamma_s$.

\begin{example}Consider the ideal $I=\langle x,y\rangle\cap\langle x,z\rangle\cap \langle y,z\rangle=\langle xy,xz,yz\rangle$ in the polynomial ring $R=K[x,y,z]$.  The relationship between the regular and symbolic powers of $I$ are well understood.  Setting $\mathfrak{m}=\langle x,y,z\rangle$ we have $I^{r}=I^{(r)}\cap \mathfrak{m}^{2r}$.  From this we can see that $I^{(s)}\subset I^r$ if and only if $s\ge\frac{4}{3}r$.  We can read off from this that $\rho(I)=\wrho(I)=4/3$.

It follows from~\cite[Theorem~5.1]{HHT07} that the symbolic Rees algebra of $I$ is generated in degree $2$, thus
\[
I^{(s)}=\sum_{y_1+2y_2=s} I^{y_1}(I^{(2)})^{y_2}.
\]
Using the notation above, $\gamma_1=\gamma_2=1$ and we have 
$
I^{(s)}\subset I^{\gamma_s},
$
where $\gamma_s=\min\{y_1+y_2:y_1+2y_2=s\}$.  This gives us $\gamma_s=\lfloor\frac{s}{2}\rfloor$ and an upper bound of $2$ on the resurgence of $I$.  As we can see from above, $I^{(s)}\subset I^{\lfloor 3/4 s\rfloor}$, so the predicted value from $\gamma_s$ is off by a linear factor of roughly $1/4 s$, and thus will not be sufficient to compute the resurgence of $I$.  Since $I$ is normal, this example also shows that the \textit{asymptotic} resurgence cannot be predicted in this way using~\cite[Theorem~4.2]{DFMS18}.
\end{example}

In order to obtain the rationality of resurgence, we use Equation~\eqref{eq:NoetherianSymbolicReesAlgebra} together with the finer control over containment granted by the use of valuations.  The intuition is that containments between symbolic powers and (integral closures of) regular powers can be reduced to inequalities between valuations evaluated on $I$ and finitely many symbolic powers of $I$ by Equation~\eqref{eq:NoetherianSymbolicReesAlgebra}.  The existence of Rees valuations ensures that only finitely many of these inequalities are needed, and rationality thus follows.

We proceed to consider how the Noetherian hypothesis on the symbolic Rees algebra interacts with valuations.  The following is a generalization of a result of the second author on Waldschmidt constants~\cite[Theorem~3.6]{DG18}.  Let $\nu:R\to \Z$ be an $R$-valuation.  It follows from properties of valuations that, for any ideals $I$ and $J$, $\nu(I+J)=\min\{\nu(I),\nu(J)\}$ and $\nu(IJ)=\nu(I)+\nu(J)$.
\begin{proposition}\label{prop:waldgeneral}
	Let $I$ be a homogeneous ideal such that $R_s(I)$ is generated in degree at most $n$ and $\nu:R\to \Z$ an $R$-valuation.  For each $i\in\{1,\dots,n\}$, let $\nu^{(i)}:= \nu(I^{(i)})$.  Then
	\[
	\widehat{\nu}(I)=\lim_{s\rightarrow\infty}\frac{\nu^{(1)}y_1+\nu^{(2)}y_2+\dots+\nu^{(n)}y_n}{s}=\min_{m\leq n}\frac{\nu(I^{(m)})}{m}
	\] 
	where $y_1,\dots,y_n$ in the first equality are positive integers minimizing $\nu^{(1)}y_1+\nu^{(2)}y_2+\dots+\nu^{(n)}y_n$ with respect to the constraint $y_1+2y_2+3y_3+\dots+ny_n=s$.
\end{proposition}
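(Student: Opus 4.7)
The plan is to reduce the asymptotic computation of $\nu(I^{(s)})$ to an integer linear program and then analyse its limit via the continuous relaxation. Because $R_s(I)$ is generated in degrees at most $n$, Equation~\eqref{eq:NoetherianSymbolicReesAlgebra} expresses $I^{(s)}$ as a sum of products $I^{y_1}(I^{(2)})^{y_2}\cdots (I^{(n)})^{y_n}$ over non-negative integer solutions of $y_1+2y_2+\cdots+ny_n=s$. Applying the two valuation identities $\nu(A+B)=\min\{\nu(A),\nu(B)\}$ and $\nu(AB)=\nu(A)+\nu(B)$ recorded just before the proposition, I would obtain
\[
\nu(I^{(s)})=\min\left\{\sum_{i=1}^n y_i\nu^{(i)}\,\,\middle|\,\, y_i\in\Z_{\ge 0},\ \sum_{i=1}^n iy_i=s\right\}.
\]
Dividing by $s$ and letting $s\to\infty$ (the limit exists by~\cite{DFMS18}) establishes the first equality.

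For the second equality, I would pass to the continuous relaxation: minimise $\sum_i z_i\nu^{(i)}$ over $z_i\ge 0$ subject to $\sum_i iz_i=1$. Rewriting the objective as $\sum_i (iz_i)(\nu^{(i)}/i)$ exhibits it as a convex combination of the values $\nu^{(i)}/i$ with weights $iz_i$ summing to $1$, so the relaxation has optimum exactly $\mu:=\min_{m\le n}\nu^{(m)}/m$. Since any non-negative integer feasible tuple with $\sum iy_i=s$ rescales (divide by $s$) to a feasible point of the relaxation, one has $\nu(I^{(s)})/s\ge\mu$ for every $s$.

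For a matching upper bound, let $m^{*}\le n$ attain $\mu$. For each $s$ write $s=qm^{*}+r$ with $0\le r<m^{*}\le n$; setting $y_{m^{*}}=q$ (and additionally $y_r=1$ when $r>0$) produces an integer feasible tuple whose objective is at most $q\nu^{(m^{*})}+\nu^{(r)}=(s-r)\mu+\nu^{(r)}$, which divided by $s$ converges to $\mu$. Combined with the lower bound this proves $\widehat{\nu}(I)=\mu$. The main step requiring care is the reduction to the integer program via Equation~\eqref{eq:NoetherianSymbolicReesAlgebra}, which is the only place the Noetherian hypothesis on $R_s(I)$ enters; after that the proof is a routine linear programming argument in which the integrality gap contributes only an $O(1)$ term that is absorbed in the limit.
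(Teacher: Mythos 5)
Your proof is correct and follows essentially the same route as the paper: reduce $\nu(I^{(s)})$ to an integer linear program via the Noetherian decomposition of $I^{(s)}$ and the valuation identities, then analyse the continuous relaxation. Your two-sided treatment of the second equality (lower bound via feasibility of the rescaled tuple, upper bound via an explicit near-optimal integer tuple obtained from division with remainder) is actually a bit more careful than the paper's brief remark that the LP vertex is an integer point for infinitely many $s$, but it is the same underlying argument.
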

\begin{proof}
	Since $R_s(I)=R[I,I^{(2)}t^2,\ldots,I^{(n)}t^n]$, we have that \[I^{(s)}=\sum_{y_1+2y_2+3y_3+\dots+ny_n= m}I^{y_1}(I^{(2)})^{y_2}(I^{(3)})^{y_3}\cdots(I^{(n)})^{y_n}.\]  
	Thus, by the properties mentioned above, 
	\[
	\nu(I^{(s)})=\mbox{min}\left\{\nu\left(I^{y_1}(I^{(2)})^{y_2}(I^{(3)})^{y_3}\dots(I^{(n)})^{y_n}\right)\mid y_1+2y_2+3y_3+\dots ny_n= s \right\}.
	\] 
	Setting $\nu^{(i)}= \nu(I^{(i)})$ gives 
	\[
	\nu(I^{(s)})=\min\left\{\nu^{(1)}y_1+\nu^{(1)}y_2+\dots+\nu^{(n)}y_n \mid y_1+2y_2+3y_3+\dots ny_n= s\right\}.
	\]
	Dividing by $s$ and taking $\lim_{s\to\infty}$ proves the first equality.  For the second equality, consider the \textit{rational} linear program 
	
	\begin{center}
		\begin{tabular}{ll}
			minimize&  $\nu^{(1)}y_1+\nu^{(2)}y_2+\dots+\nu^{(n)}y_n$  \\[3 pt]
			subject to& $y_i\ge 0$ for $i=1,\ldots,n$ \\[3 pt]
			 and & $y_1+2y_2+3y_3+\dots ny_n= s$. 
		\end{tabular}
	\end{center}
	
	\noindent The constraints define an $(n-1)$-simplex with vertices $(s,0,\ldots,0)$, $(0,s/2,0,\ldots,0)$, $\ldots$ , $(0,\ldots,s/n)$; hence the minimum of the linear functional $\nu^{(1)}y_1+\nu^{(2)}y_2+\dots+\nu^{(n)}y_n$ is attained at one of these vertices, say $(0,\ldots,s/k,\ldots,0)$.  It follows that the solution to the above linear program is $\dfrac{\nu^{(k)} s}{k}$ for some $1\le k\le n$.  Whenever $k\mid \nu^{(k)} s$ this solution will also be integral.  As this happens for infinitely many $s$, dividing by $s$ and taking $\lim_{s\to \infty}$ proves the second equality.\qedhere
\end{proof}

\begin{remark}\label{rem:Cvalue}
It is well-known that $R_s(I)$ is Noetherian if and only if there is an integer $c$ so that $I^{(cn)}=(I^{(c)})^n$ for all $n\ge 1$ (see for instance~\cite[Theorem~2.1]{HHT07}).  From this it is straightforward to see that $\widehat{\nu}(I)=\lim_{k\to\infty} \frac{\nu(I^{(kc)})}{kc}=\frac{\nu((I^{(c)})^k)}{kc}=\frac{\nu(I^{(c)})}{c}$.  Proposition~\ref{prop:waldgeneral} offers a slight computational advantage as the value of $c$ guaranteed by the Noetherian property of $R_s(I)$ is not always bounded above by the maximum generating degree of $R_s(I)$ -- see the following example.
\end{remark}

\begin{example}
The following example appears as~\cite[Example~5.5]{HHT07}.  Suppose $R=K[x_1,\ldots,x_7]$ and $I$ is the monomial ideal generated by the monomials below:
\[
\begin{array}{cccccc}
{x}_{1}{x}_{2} & {x}_{1}{x}_{3} & {x}_{2}{x}_{3} & {x}_{1}{x}_{4} & {x}_{2}{x}_{4} & {x}_{1}{x}_{5} \\
{x}_{2}{x}_{5} & {x}_{3}{x}_{5} & {x}_{1}{x}_{6} & {x}_{2}{x}_{6} & {x}_{3}{x}_{6} & {x}_{4}{x}_{6} \\
{x}_{1}{x}_{7} & {x}_{2}{x}_{7} & {x}_{4}{x}_{7} & {x}_{5}{x}_{7}
\end{array}
\]
Then $R_s(I)$ is generated as an $R$-algebra by $52$ monomials of degree at most $7$ (see~\cite[Example~5.5]{HHT07}).  However we can check in Macaulay2~\cite{M2} that $(I^{(i)})^2\neq I^{(2i)}$ for any $i=1,\ldots,7$.  Thus the $c$ in Remark~\ref{rem:Cvalue} is strictly larger than the generating degree of $R_s(I)$.
\end{example}

The next corollary, which follows immediately from Proposition~\ref{prop:waldgeneral} and Theorem~\ref{thm:AsymptoticResurgenceByIntegralClosures}, shows that we can compute asymptotic resurgence of an ideal with Noetherian symbolic Rees algebra exactly using Rees valuations of its symbolic powers up to the generating degree.

\begin{corollary}\label{cor:AssResRational}
Suppose $I\subset R$ is an ideal with Noetherian symbolic Rees algebra generated in degree at most $n$ and Rees valuations $\nu_1,\ldots,\nu_r$.  Then
\[
\widehat{\rho}(I)=\max\limits_{1\le i\le r,1\le j\le n}\left\lbrace \frac{j\nu_i(I)}{\nu_i(I^{(j)})} \right\rbrace.
\]
\end{corollary}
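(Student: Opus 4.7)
The plan is to simply combine the two main results that have been developed in the preceding material: Theorem~\ref{thm:AsymptoticResurgenceByIntegralClosures} gives a formula for $\widehat{\rho}(I)$ in terms of Rees valuations, while Proposition~\ref{prop:waldgeneral} evaluates each $\widehat{\nu}_i(I)$ under the Noetherian hypothesis on $R_s(I)$. Substituting one into the other will give the claimed expression.

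More concretely, I would begin by invoking Theorem~\ref{thm:AsymptoticResurgenceByIntegralClosures} to write
\[
\widehat{\rho}(I)=\max_{1\le i\le r}\frac{\nu_i(I)}{\widehat{\nu}_i(I)}.
\]
Next, for each Rees valuation $\nu_i$, I would apply Proposition~\ref{prop:waldgeneral}, which is applicable because $R_s(I)$ is generated in degrees at most $n$, to obtain
\[
\widehat{\nu}_i(I)=\min_{1\le j\le n}\frac{\nu_i(I^{(j)})}{j}.
\]
Substituting this into the previous display gives
\[
\widehat{\rho}(I)=\max_{1\le i\le r}\frac{\nu_i(I)}{\min_{1\le j\le n}\nu_i(I^{(j)})/j}.
\]

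The only remaining step is the elementary observation that dividing a positive quantity by a minimum of positive quantities yields a maximum, so the inner expression becomes $\max_{1\le j\le n}\frac{j\nu_i(I)}{\nu_i(I^{(j)})}$ (noting $\nu_i(I)>0$ because $\nu_i$ is a Rees valuation of $I$, and $\nu_i(I^{(j)})>0$ for the same reason since $I^{(j)}\subset \sqrt{I}$-primary components force $\nu_i(I^{(j)})\ge \nu_i(I)\ge 1$). Collapsing the nested maxima over $i$ and $j$ then yields the claimed formula. There is no real obstacle here: the work was done in Proposition~\ref{prop:waldgeneral}, and this corollary is essentially a bookkeeping step that packages the combination into a formula useful for explicit computation.
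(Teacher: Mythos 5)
Your proposal is correct and matches the paper exactly: the corollary is stated in the paper as following immediately from Theorem~\ref{thm:AsymptoticResurgenceByIntegralClosures} and Proposition~\ref{prop:waldgeneral}, which is precisely the substitution you carry out. The only extra content in your write-up is spelling out the elementary positivity observations and the min-to-max manipulation, which are fine (and the chain $I^{(j)}\subset I^{(1)}=I$ does indeed justify $\nu_i(I^{(j)})\ge\nu_i(I)\ge 1$).
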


\begin{theorem}\label{thm:Main}
Suppose $I\subset R=K[x_1,\ldots,x_d]$ is an ideal with Noetherian symbolic Rees algebra.  Then the resurgence and asymptotic resurgence of $I$ are rational.
\end{theorem}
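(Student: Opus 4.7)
The plan is to combine the two main tools developed in the excerpt: Corollary~\ref{cor:AssResRational}, which pins down asymptotic resurgence as a maximum of finitely many rational expressions when $R_s(I)$ is Noetherian, and Proposition~\ref{prop:Resurgence}, which reduces the computation of resurgence to a finite search whenever $\widehat{\rho}(I)<\rho(I)$.

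First I would handle asymptotic resurgence. By hypothesis $R_s(I)$ is Noetherian, so it is generated in some finite degree $n$. Corollary~\ref{cor:AssResRational} then gives
\[
\widehat{\rho}(I)=\max_{1\le i\le r,\ 1\le j\le n}\frac{j\,\nu_i(I)}{\nu_i(I^{(j)})},
\]
where $\nu_1,\ldots,\nu_r$ are the Rees valuations of $I$. Since every $R$-valuation takes integer values on nonzero elements of $R$, each ratio in the maximum is rational, and hence $\widehat{\rho}(I)\in\mathbb{Q}$.

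Next I would split into two cases based on whether $\widehat{\rho}(I)$ equals $\rho(I)$. Since the definitions immediately yield $\widehat{\rho}(I)\le \rho(I)$, there are only two possibilities. If $\widehat{\rho}(I)=\rho(I)$, then the rationality of $\widehat{\rho}(I)$ proved in the previous paragraph transfers directly to $\rho(I)$. If instead $\widehat{\rho}(I)<\rho(I)$, then Proposition~\ref{prop:Resurgence} applies and says exactly that $\rho(I)$ is a maximum of finitely many ratios $s/r$ with $s,r$ positive integers; in particular $\rho(I)\in\mathbb{Q}$.

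The main (really the only) subtlety is ensuring that Corollary~\ref{cor:AssResRational} is genuinely available, i.e.\ that the generating degree $n$ of $R_s(I)$ is finite. This is immediate from the Noetherian assumption on $R_s(I)$, so the proof reduces to the case split above; no further computation is needed.
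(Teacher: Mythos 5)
Your proposal matches the paper's proof exactly: both establish rationality of $\widehat{\rho}(I)$ via Corollary~\ref{cor:AssResRational} and then split on whether $\widehat{\rho}(I)=\rho(I)$, invoking Proposition~\ref{prop:Resurgence} in the strict-inequality case. The only difference is that you spell out the (correct) observation that each ratio in Corollary~\ref{cor:AssResRational} is rational because valuations are integer-valued, which the paper leaves implicit.
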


\begin{proof}
Let $I$ be an ideal of $R$ so that the symbolic Rees algebra $R_s(I)$ is Noetherian.  Corollary~\ref{cor:AssResRational} shows that $\widehat{\rho}(I)$ is rational.  Hence we consider $\rho(I)$.  If $\rho(I)=\widehat{\rho}(I)$, then we are done.  Otherwise Proposition~\ref{prop:Resurgence} guarantees $\rho(I)$ is rational.
\end{proof}

\section{Expected resurgence}\label{sec:Expected}

An ideal $I\subset R$ with big height $h$ is said to have \textit{expected} resurgence if $\rho(I)<h$.  As observed by Harbourne, Kettinger, and Zimmitti in~\cite{HKZ20}, it follows from Proposition~\ref{prop:Resurgence} that $\wrho(I)<h$ implies $\rho(I)<h$. We also give a short proof of this.

\begin{corollary}\label{cor:expected}
If an ideal $I\subset R$ with big height $h$ has $\wrho(I)<h$, then $\rho(I)<h$.
\end{corollary}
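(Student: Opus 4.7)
The plan is to split into two cases according to whether $\rho(I)=\wrho(I)$ or $\wrho(I)<\rho(I)$. In the first case the conclusion is immediate: $\rho(I)=\wrho(I)<h$. So the substantive case is $\wrho(I)<\rho(I)$, and here the idea is to leverage Proposition~\ref{prop:Resurgence} together with the uniform containment theorem of Ein--Lazarsfeld--Smith, Hochster--Huneke, and Ma--Schwede.

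More precisely, assuming $\wrho(I)<\rho(I)$, Proposition~\ref{prop:Resurgence} guarantees that $\rho(I)$ can be written as a \emph{maximum} (not just a supremum) over a finite collection of ratios $s/r$ with $I^{(s)}\not\subset I^r$; call this finite set of pairs $\mathcal{F}$. Now I invoke uniform containment: $I^{(hr)}\subset I^r$ for all $r\ge 1$. Therefore, for every $(s,r)\in\mathcal{F}$ we must have $s<hr$, i.e.\ $s/r<h$. Since the maximum of finitely many rational numbers each strictly less than $h$ is itself strictly less than $h$, we conclude $\rho(I)<h$.

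I don't foresee any serious obstacle: the whole argument is a one-line reduction once Proposition~\ref{prop:Resurgence} is in hand, because the finiteness of the indexing set is exactly what converts the ``$<h$'' inequalities pointwise into a strict inequality for the supremum. The only care needed is to ensure that uniform containment applies with the \emph{big} height $h$ (which is the statement we need, and is precisely the form proved in~\cite{ELS01,HH02,MS18}).
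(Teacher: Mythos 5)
Your argument is correct and is essentially the same as the paper's: both reduce immediately to the case $\wrho(I)<\rho(I)$, invoke Proposition~\ref{prop:Resurgence} to realize $\rho(I)$ as a maximum over a finite set of ratios $s/r$ with $I^{(s)}\not\subset I^r$, and then apply the uniform containment theorem to conclude each such $s/r<h$. The paper phrases the final step by selecting the single pair $(s_0,r_0)$ attaining the maximum rather than bounding the whole finite set, but this is only a cosmetic difference.
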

\begin{proof}
It suffices to consider the case $\wrho(I)<\rho(I)$.  By Proposition~\ref{prop:Resurgence}, $\rho(I)=\frac{s_0}{r_0}$ for some integers $r_0,s_0$ so that $I^{(s_0)}\not\subset I^{r_0}$.  If $s_0\ge hr_0$ then $I^{(s_0)}\subset I^{r_0}$ by the well-known uniform containment result for symbolic powers~\cite{ELS01,HH02,MS18}.  Thus $s_0<hr_0$ and $\rho(I)<h$.
\end{proof}

\begin{remark}\label{rem:swanson}
Clearly it suffices to replace $h$ in Corollary~\ref{cor:EquivExpected} with any number $\mathfrak{s}$ so that $I^{(r\mathfrak{s})}\subset I^r$ for all $r\ge 1$.
\end{remark}

\begin{corollary}
Suppose $I$ is the edge ideal of a graph.  Then $\rho(I)<2$.
\end{corollary}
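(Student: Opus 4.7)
The plan is to invoke Corollary~\ref{cor:expected} together with Remark~\ref{rem:swanson}, taking the parameter $\mathfrak{s}=2$ in place of the big height. This reduces the statement $\rho(I)<2$ to establishing, for the edge ideal $I=I(G)$ of a graph $G$, the two facts:
\begin{enumerate}
\item[(a)] $I^{(2r)}\subseteq I^r$ for every $r\ge 1$;
\item[(b)] $\wrho(I)<2$.
\end{enumerate}

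For (a), the argument is purely combinatorial. A monomial $m=x^a$ lies in $I^{(2r)}$ precisely when every minimal vertex cover $C$ of $G$ satisfies $\sum_{i\in C}a_i\ge 2r$. Constructing the auxiliary simple graph $H$ by replacing each vertex $v$ of $G$ with $a_v$ copies and each edge $uv\in E(G)$ with the complete bipartite graph $K_{a_u,a_v}$ between these copies, one checks that the vertex cover number of $H$ equals the minimum weighted vertex cover of $G$, and is therefore at least $2r$. The classical bound $\tau(H)\le 2\nu(H)$ then produces a matching of size at least $r$ in $H$, which descends to a factorization of $m$ as a product of $r$ edges of $G$ times a remaining monomial, witnessing $m\in I^r$.

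For (b), I would use the characterization of asymptotic resurgence in Theorem~\ref{thm:AsymptoticResurgenceByIntegralClosures}: it suffices to show $\nu(I)<2\widehat{\nu}(I)$ for each of the finitely many Rees valuations $\nu$ of $I$. Since $I$ is a monomial ideal, every such $\nu$ is a monomial weight valuation $\nu_w$, for which $\nu_w(I)=\min_{ij\in E(G)}(w_i+w_j)$, while $\widehat{\nu}_w(I)$ is computable as the linear-programming optimum of $\sum a_i w_i$ over the symbolic polyhedron of $I(G)$. The required strict inequality is a weighted analog of the fact that every minimal vertex cover is strictly less than twice the fractional matching number. The bipartite case is degenerate: by Simis--Vasconcelos--Villarreal, $I^{(s)}=I^s$ for all $s$, so $\wrho(I)=1$; in the non-bipartite case, the presence of odd cycles forces a strict gap between the relevant fractional and half-integral LP optima.

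The principal obstacle will be (b): although the analogous bound $\alpha(I)<2\widehat{\alpha}(I)$ for the total-degree valuation is readily accessible from known Waldschmidt-constant computations for graphs, extending the strict inequality uniformly across \emph{all} Rees valuations — that is, all monomial weightings — requires an LP argument tailored to the symbolic polyhedron of $I(G)$. Fact (a), by contrast, follows more or less directly from the elementary König-type inequality $\tau\le 2\nu$ applied to the blow-up graph $H$.
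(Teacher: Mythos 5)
Your top-level reduction is exactly the paper's: use Corollary~\ref{cor:expected} together with Remark~\ref{rem:swanson} with $\mathfrak{s}=2$ in place of the big height, so it suffices to show (a) $I^{(2r)}\subseteq I^r$ for all $r\ge 1$ and (b) $\wrho(I)<2$. Where you diverge is that the paper simply cites \cite{DFMS18} for both facts --- (a) follows from the proof of \cite[Theorem~3.18]{DFMS18} and (b) is read off from \cite[Corollary~3.14]{DFMS18}, which gives a closed formula for $\wrho$ of an edge ideal --- whereas you attempt to reprove them.

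Your sketch of (a) via the duplication graph $H$ and the inequality $\tau(H)\le 2\nu(H)$ is essentially the standard argument (it is close to the one in \cite{DFMS18} and \cite{Waldschmidt16}), and the passage from a size-$r$ matching in $H$ to a factorization of $m$ as a product of $r$ edge monomials of $G$ times a remainder is sound: matching edges are vertex-disjoint, so each vertex $v$ is used at most $a_v$ times, hence $\prod_j x^{e_j}\mid m$. The claim $\tau(H)=\min_{C\text{ a v.c.\ of }G}\sum_{v\in C}a_v$ should really be justified rather than asserted, but it is a known fact about blow-ups of graphs and is not where the difficulty lies.

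The genuine gap is in (b), and you flag it yourself. You correctly identify that by Theorem~\ref{thm:AsymptoticResurgenceByIntegralClosures} it suffices to show $\nu_w(I)<2\widehat{\nu}_w(I)$ for each Rees (monomial) valuation $\nu_w$, that $\nu_w(I)=\min_{ij\in E(G)}(w_i+w_j)$, and that $\widehat{\nu}_w(I)$ is an LP optimum over the symbolic polyhedron of $I(G)$. But the step that actually establishes the strict inequality uniformly over all weightings $w$ --- what you call a ``weighted analog of the fact that every minimal vertex cover is strictly less than twice the fractional matching number'' and the ``LP argument tailored to the symbolic polyhedron'' --- is not carried out. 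This is precisely the content that \cite[Corollary~3.14]{DFMS18} supplies, and without either proving it or citing it your argument does not close. As written, your proposal is a correct reduction with one of the two required inputs left unproved; the paper avoids this by citing \cite{DFMS18} for both inputs.
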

\begin{proof}
It follows from the proof of~\cite[Theorem~3.18]{DFMS18} that $I^{(2r)}\subset I^r$ for every $r\ge 1$ and it is evident from~\cite[Corollary~3.14]{DFMS18} that $\wrho(I)<2$.  Thus Corollary~\ref{cor:EquivExpected} and Remark~\ref{rem:swanson} complete the proof.
\end{proof}

\begin{example}\label{ex:3triangles}
Even if $I$ is an edge ideal of a graph, it may happen that $\wrho(I)<\rho(I)$.  We learned in a personal communication with Alexandra Seceleanu of the following example found by Andrew Conner.  Let $R$ be the polynomial ring in the nine variables $a,\ldots,i$ and $I=(ab,ac,bc,de,df,ef,gh,gi,hi)$ (this is the edge ideal of three disjoint triangles).  Then $\wrho(I)=4/3$ by~\cite[Theorem~3.12]{DFMS18},~\cite[Corollary~7.10]{Waldschmidt16}, and the fact that the three-cycle has asymptotic resurgence  $4/3$.  However, $I^{(6)}\not\subset I^4$, so $\rho(I)\ge 3/2$.  (We can show that $\rho(I)=3/2$ using the technique of Example~\ref{ex:resurgencefromasymptoticresurgence}.)
\end{example}

In~\cite{G18} and~\cite{GHM19}, a containment result of Johnson~\cite{J14} is used in a crucial way to show that a single containment for symbolic powers may give rise to infinitely many containments.  Likewise, we use Johnson's result to show that a single containment leads to an upper bound on asymptotic resurgence.  In the remainder of this section we will assume $I$ is radical, which will make the application of~\cite{J14} more straightforward (see Remark~\ref{rem:analyticspread}).  The main difference between Corollaries~\ref{cor:EquivExpected} and~\ref{cor:strictineq} and the criteria of~\cite{GHM19} is that we replace containment in regular powers with containment in integral closure of regular powers.

\begin{theorem}\label{thm:AsymptoticResurgenceBound}
Suppose $I$ is a radical ideal with big height $h$, and suppose there are positive integers $r,s$ so that $I^{(s+1)}\subset \overline{I^r}$.  Then $\wrho(I)\le \frac{s+h}{r}$.
\end{theorem}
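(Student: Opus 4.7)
The plan is to reduce the bound on $\widehat{\rho}(I)$ to a valuation-theoretic statement via Theorem~\ref{thm:AsymptoticResurgenceByIntegralClosures}, and then use Johnson's refined uniform containment~\cite{J14} to upgrade the single hypothesis $I^{(s+1)}\subset \overline{I^r}$ into an infinite family of containments that can be fed to the valuation machinery.

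More specifically, by Theorem~\ref{thm:AsymptoticResurgenceByIntegralClosures} it suffices to show that every $R$-valuation $\nu$ supported on $I$ satisfies $\nu(I)/\widehat{\nu}(I)\le (s+h)/r$, or equivalently $(s+h)\widehat{\nu}(I)\ge r\,\nu(I)$. To produce the requisite lower bound on $\widehat{\nu}(I)$, I will invoke a Johnson-type containment of the form $I^{((s+h)n)}\subset (I^{(s+1)})^n$ valid for all $n\ge 1$ when $I$ is radical of big height $h$. Composing with the hypothesis and the standard inclusion $(\overline{I^r})^n\subset \overline{I^{rn}}$ yields
\[
I^{((s+h)n)} \subset (I^{(s+1)})^n \subset (\overline{I^r})^n \subset \overline{I^{rn}}
\]
for every $n\ge 1$.

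From here the valuative criterion for integral closure immediately gives $\nu(I^{((s+h)n)})\ge rn\,\nu(I)$ for any $R$-valuation $\nu$ supported on $I$. Dividing by $(s+h)n$ and letting $n\to\infty$ produces the required lower bound $\widehat{\nu}(I)\ge r\,\nu(I)/(s+h)$. Taking the supremum over $\nu$ and appealing once more to Theorem~\ref{thm:AsymptoticResurgenceByIntegralClosures} delivers $\widehat{\rho}(I)\le (s+h)/r$.

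The main obstacle will be the proper invocation of Johnson's theorem: the statement in the literature is most naturally phrased in terms of the analytic spread rather than the big height, and the passage from analytic spread to big height is exactly what forces the radical hypothesis (as alluded to in Remark~\ref{rem:analyticspread}). Once this citation is handled correctly, the rest is a clean combination of a single containment with the valuative calculus developed in~\cite{DFMS18}.
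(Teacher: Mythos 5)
Your proof is correct and takes essentially the same route as the paper: you invoke Johnson's theorem with the same choice of parameters to obtain $I^{(q(s+h))}\subset(I^{(s+1)})^q\subset\overline{I^{qr}}$ for all $q\ge1$, and then conclude via the valuation-theoretic description of $\widehat{\rho}(I)$. The only cosmetic difference is that the paper closes the argument by citing \cite[Lemma~4.12(b)]{DFMS18} as a black box, whereas you unpack that lemma explicitly using the valuative criterion for integral closure and Theorem~\ref{thm:AsymptoticResurgenceByIntegralClosures}; both amount to the same computation.
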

\begin{proof}
We use~\cite[Theorem~4.4]{J14} which states that
\[
I^{(a+qh)}\subset \prod_{i=1}^q I^{(a_i+1)},
\]
where $a$ and $q$ are positive integers, $h$ is the big height of $I$, and $a_1,\ldots,a_q$ are integers so that $\sum a_i=a$.  We apply Johnson's result with $a=qs$ and $a_i=s$ for $i=1,\ldots,q$, yielding
\[
I^{(q(s+h))}=I^{(qs+qh)}\subset (I^{(s+1)})^q\subset (\overline{I^r})^q\subset \overline{I^{qr}} \mbox{ for all } q\ge 1.
\]
Applying part (b) of~\cite[Lemma~4.12]{DFMS18} yields that if $I^{(q(s+h))}\subset \overline{I^{qr}}$ for infinitely many $q> 0$ then $\wrho(I)\le \frac{s+h}{r}$.
\end{proof}

\begin{remark}
The bound in Theorem~\ref{thm:AsymptoticResurgenceBound} does not work for resurgence.  That is, if $I^{(s+1)}\subset I^r$, it does not follow that $\rho(I)\le \frac{r+h}{s}$.  To see this, take any ideal for which $\wrho(I)<\rho(I)$ (we have seen four examples of this so far in Examples~\ref{ex:resurgencefromasymptoticresurgence} and~\ref{ex:3triangles}, but the first such example was found in~\cite{DST13}), and select integers $s,r$ so that $\wrho(I)<\frac{s}{r}<\rho(I)$.  Then $I^{(st)}\subset I^{rt}$ for infinitely many choices of positive integer $t$ (by definition of $\wrho(I)$).  Thus we may select an integer $t$ large enough which satisfies both $I^{(st)}\subset I^{rt}$ and $\frac{st+h}{rt}<\rho(I)$.
\end{remark}

\begin{remark}
Theorem~\ref{thm:AsymptoticResurgenceBound} is a generalization of part (3) of~\cite[Theorem~1.2]{GHV13} to arbitrary containments as well as ideals without Noetherian symbolic Rees algebras.  Note that the statistic referred to as $\rho'_a(I)$ in~\cite{GHV13} has recently been shown to be equal to $\wrho(I)$ in~\cite{HKZ20}.
\end{remark}


\begin{corollary}\label{cor:EquivExpected}
A radical ideal $I$ with big height $h$ has expected resurgence if and only if $I^{(rh-h)}\subset \overline{I^{r}}$ for some positive integer $r$.
\end{corollary}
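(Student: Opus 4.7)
The plan is to prove the two implications separately, with the ``if'' direction essentially packaged by Theorem~\ref{thm:AsymptoticResurgenceBound} and Corollary~\ref{cor:expected}, and the ``only if'' direction requiring the Rees valuation characterization of asymptotic resurgence.

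For the ``if'' direction, I would assume $I^{(rh-h)}\subseteq \overline{I^r}$ for some positive integer $r$ (necessarily $r\ge 2$ when $I$ is proper, since $r=1$ would force $R\subseteq \overline{I}$). Setting $s=rh-h-1$ and feeding $I^{(s+1)}\subseteq \overline{I^r}$ into Theorem~\ref{thm:AsymptoticResurgenceBound} yields
\[
\widehat{\rho}(I)\le \frac{s+h}{r}=\frac{rh-1}{r}=h-\frac{1}{r}<h.
\]
Corollary~\ref{cor:expected} then gives $\rho(I)<h$, so $I$ has expected resurgence.

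For the ``only if'' direction, starting from $\rho(I)<h$, I note $\widehat{\rho}(I)\le \rho(I)<h$ and pick an integer $r$ so large that $r>h/(h-\widehat{\rho}(I))$, equivalently $(rh-h)/r>\widehat{\rho}(I)$. The claim is that $I^{(rh-h)}\subseteq \overline{I^r}$. I would establish this via the valuative criterion using the Rees valuations $\nu_1,\dots,\nu_t$ of $I$ (which are also the ones relevant for $\overline{I^r}$): by Theorem~\ref{thm:AsymptoticResurgenceByIntegralClosures}, $\nu_i(I)/\widehat{\nu}_i(I)\le \widehat{\rho}(I)<(rh-h)/r$, which rearranges to $(rh-h)\widehat{\nu}_i(I)>r\nu_i(I)=\nu_i(I^r)$. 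Separately, the containment $I^{(a)}I^{(b)}\subseteq I^{(a+b)}$ makes $s\mapsto \nu_i(I^{(s)})$ subadditive, so by Fekete's lemma $\widehat{\nu}_i(I)=\inf_s \nu_i(I^{(s)})/s$, and in particular $\nu_i(I^{(rh-h)})\ge (rh-h)\widehat{\nu}_i(I)$. Chaining these inequalities produces $\nu_i(I^{(rh-h)})>\nu_i(I^r)$ for every Rees valuation, and the valuative criterion then gives $I^{(rh-h)}\subseteq \overline{I^r}$.

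The main obstacle is the ``only if'' direction, specifically upgrading the asymptotic inequality $\widehat{\rho}(I)<h$ into a concrete pointwise containment at a single power. The conceptual crux is that Theorem~\ref{thm:AsymptoticResurgenceByIntegralClosures} combined with the fact that $\widehat{\nu}_i(I)$ is an infimum (not merely a limit) of $\nu_i(I^{(s)})/s$ allows one to convert an asymptotic valuation inequality into an ideal containment; everything else is inequality manipulation.
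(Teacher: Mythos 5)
Your ``if'' direction coincides exactly with the paper's: set $s=rh-h-1$, feed $I^{(s+1)}\subseteq\overline{I^r}$ into Theorem~\ref{thm:AsymptoticResurgenceBound} to get $\widehat{\rho}(I)\le (rh-1)/r<h$, then apply Corollary~\ref{cor:expected}. For the ``only if'' direction you take a genuinely different route. The paper simply cites~\cite[Remark~2.7]{G18}, which gives the stronger statement $I^{(hr-h)}\subseteq I^r$ for all $r\gg 0$, and then passes to the integral closure. You instead argue internally: from $\widehat{\rho}(I)<h$ choose $r$ with $(rh-h)/r>\widehat{\rho}(I)$; by Theorem~\ref{thm:AsymptoticResurgenceByIntegralClosures} each Rees valuation satisfies $\nu_i(I)/\widehat{\nu}_i(I)\le\widehat{\rho}(I)<(rh-h)/r$; and since $I^{(a)}I^{(b)}\subseteq I^{(a+b)}$ makes $s\mapsto\nu_i(I^{(s)})$ subadditive, Fekete's lemma upgrades the limit $\widehat{\nu}_i(I)$ to an infimum, yielding $\nu_i(I^{(rh-h)})\ge(rh-h)\widehat{\nu}_i(I)>r\nu_i(I)=\nu_i(I^r)$ for every Rees valuation, hence $I^{(rh-h)}\subseteq\overline{I^r}$ by the valuative criterion. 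This is correct. What it buys you is a proof that stays entirely within the integral-closure and Rees-valuation machinery already present in the paper, rather than importing a result on stable containments from Grifo's work; the key observation (that the limit defining $\widehat{\nu}$ is actually an infimum) is a genuinely useful way to turn asymptotic valuation inequalities into one-shot containments, and is in effect a proof of the contrapositive of part of~\cite[Lemma~4.12]{DFMS18}, which the paper uses elsewhere and which would also give this direction in one line. The tradeoff is that you obtain only containment in $\overline{I^r}$ rather than in $I^r$ itself, but the corollary needs no more.
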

\begin{proof}
If $I$ has expected resurgence then $I^{(hr-C)}\subset I^r$ for all $r\gg 0$ and any non-negative integer $C$ (see~\cite[Remark~2.7]{G18}).  Thus $I^{(hr-h)}\subset I^r\subset \overline{I^r}$ for all $r\gg 0$.  For the converse, it follows from Theorem~\ref{thm:AsymptoticResurgenceBound} that $\wrho(I)\le \frac{rh-h-1+h}{r}=\frac{rh-1}{r}<h$.  By Corollary~\ref{cor:expected}, $\rho(I)<h$ as well.
\end{proof}

\begin{remark}\label{rem:GHM}
An application of~\cite[Theorem~4.4]{J14} yields that if $I^{(rh-h)}\subset I^{r}$ for some $r\ge 1$ then $I^{(sh)}\subset I^q I^s$ for $s\ge hr-h-1$, where $q=\lfloor s/(hr-1) \rfloor$.  Thus a variant of Corollary~\ref{cor:EquivExpected}, where $\overline{I^r}$ is replaced by $I^r$, follows from~\cite[Corollary~2.8]{GHM19}.
\end{remark}

\begin{remark}
Notice that $\frac{s+h}{r}<h$ if and only if $s<rh-h$.  Since $s$ and $r$ are integers, this means $s\le rh-h-1$.  It follows that the containment $I^{(rh-h)}\subset \overline{I^r}$ is the weakest containment we could require to imply expected resurgence using Theorem~\ref{thm:AsymptoticResurgenceBound}.
\end{remark}


\begin{example}\label{ex:LateContainment}
Let $R=K[x_1,\ldots,x_\ell]$ and let $I=\cap_{1\le i<j\le \ell} (x_i,x_j)$.  It follows from~\cite[Theorem~4.8]{GHM13} that $I^r=\overline{I^r}$ for all $r\ge 1$; in particular $\rho(I)=\wrho(I)$ by~\cite[Corollary~4.14]{DFMS18}.  It is also known that $I^{(2r-2)}\subset I^r$ for $r=\ell$ and the containment fails for $r<\ell$~\cite[Example~2.6]{G18}.  By Corollary~\ref{cor:EquivExpected}, $\rho(I)<2$; applying Theorem~\ref{thm:AsymptoticResurgenceBound} to the containment $I^{(2\ell-2)}\subset I^\ell$  gives $\wrho(I)\le \frac{2\ell-1}{\ell}$.  In fact $\rho(I)=\wrho(I)=\frac{2\ell-2}{\ell}$ by~\cite[Theorem~C]{LM15}.
\end{example}

Using Theorem~\ref{thm:AsymptoticResurgenceBound} and a slightly stricter containment, we can modify the conclusion of Theorem~\ref{thm:AsymptoticResurgenceBound} to a strict inequality.  Before giving the statement we discuss some restrictions we can make on the valuations appearing in Theorem~\ref{thm:AsymptoticResurgenceByIntegralClosures}.  Recall that the \textit{center} of an $R$-valuation $\nu:R\to\Z$ is the prime ideal $\mathfrak{m}_\nu=\{f\in R: \nu(f)\ge 1\}$. The union $\bigcup_r\mbox{Ass}(R/\overline{I^r})$ is a finite set, given precisely by the centers of the Rees valuations of $I$ (see~\cite[Corollary~10.2.4]{IntegralClosure}).  Moreover, $\mbox{Ass}(R/\overline{I^i})\subset \mbox{Ass}(R/\overline{I^{i+1}})$ for every integer $i\ge 1$ (~\cite[Proposition~6.8.8]{IntegralClosure}), so the centers of the Rees valuations coincide with $\mbox{Ass}(R/\overline{I^i})$ for any $i\gg 0$.  It follows that the supremum in Theorem~\ref{thm:AsymptoticResurgenceByIntegralClosures} need only be taken over valuations whose centers are in the set $\mbox{Ass}(R/\overline{I^i})$ for any $i\gg 0$.

Since $\nu\equiv 0$ on $R\setminus \mathfrak{m}_\nu$, $\nu$ extends naturally to an $R_{\mathfrak{m}_{\nu}}$-valuation where $\nu(r/p)=\nu(r)$ for any $p\notin \mathfrak{m}_\nu$.  If $\nu$ is a valuation whose center is a \textit{minimal} prime $P$ of $\overline{I^i}$ for some $i$, then $P$ is also a minimal prime of $I$ and $\nu(I^{(s)})=\nu(I^{(s)}R_P)=\nu(I^sR_P)=\nu(I^s)$ and hence $\widehat{\nu}(I)=\nu(I)$.  Since we always have $\wrho(I)\ge 1$ (see part (1) of~\cite[Theorem~1.2]{GHV13}), we may take the supremum (respectively maximum) in Theorem~\ref{thm:AsymptoticResurgenceByIntegralClosures} over valuations (respectively Rees valuations) whose center is an embedded prime of $\overline{I^i}$ for some integer $i\gg 0$ (if the center of every Rees valuation is a minimal prime of $I$, then $\wrho(I)=1$).  This motivates the hypothesis in the following upper bound on resurgence, which closely mirrors one of the standard hypotheses in~\cite{GHM19}.

\begin{theorem}\label{thm:asymptoticresurgencebound2}
Let $I\subset R$ be a radical ideal with big height $h$.  Let $P_1,\cdots,P_k$ be the set of all embedded primes of $\overline{I^i}$ for some $i\gg 0$.  Put $J=P_1\cap \cdots\cap P_k$.  If $I^{(s+1)}\subset J \overline{I^r}$ for some positive integers $r$ and $s$, then $\wrho(I)<\frac{s+h}{r}$.
\end{theorem}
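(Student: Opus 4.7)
My plan is to mimic the proof of Theorem~\ref{thm:AsymptoticResurgenceBound} but use the extra factor $J$ in the hypothesis to promote the non-strict inequality to a strict one. As before, I would apply \cite[Theorem~4.4]{J14} with $a=qs$ and $a_i=s$ for each $i=1,\ldots,q$ to obtain
\[
I^{(q(s+h))}\subset (I^{(s+1)})^q\subset J^q\,\overline{I^{qr}}\quad\text{for every } q\ge 1.
\]
Rather than passing to integral closures via \cite[Lemma~4.12]{DFMS18} as in Theorem~\ref{thm:AsymptoticResurgenceBound}, I would instead test this chain directly against Rees valuations, so as to keep track of the contribution of $J$.

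The main computation would then proceed as follows. Fix a Rees valuation $\nu$ of $I$ whose center $\mathfrak{m}_\nu$ is one of the embedded primes $P_j$; the very definition of $J$ then forces $J\subset P_j=\mathfrak{m}_\nu$, so $\nu(J)\ge 1$. Using additivity $\nu(AB)=\nu(A)+\nu(B)$ and the identity $\nu(\overline{I^m})=m\nu(I)$ (which is immediate from the valuative criterion for integral closure), applying $\nu$ to the display above gives
\[
\nu(I^{(q(s+h))})\ge q\nu(J)+qr\nu(I).
\]
Dividing by $q(s+h)$ and letting $q\to\infty$ yields $\widehat{\nu}(I)\ge(\nu(J)+r\nu(I))/(s+h)$, and since $\nu(J)\ge 1>0$ this rearranges to the strict inequality
\[
\frac{\nu(I)}{\widehat{\nu}(I)}\le\frac{(s+h)\nu(I)}{\nu(J)+r\nu(I)}<\frac{(s+h)\nu(I)}{r\nu(I)}=\frac{s+h}{r}.
\]

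To conclude, I would appeal to Theorem~\ref{thm:AsymptoticResurgenceByIntegralClosures} together with the paragraph immediately preceding the present theorem, which observes that the maximum defining $\wrho(I)$ can be restricted to Rees valuations whose centers are either minimal primes of $I$ (for which $\widehat{\nu}(I)=\nu(I)$, so the ratio is exactly $1$) or embedded primes $P_j$ (handled by the computation above). For the minimal-center contribution I would apply such a valuation $\mu$ directly to the hypothesis $I^{(s+1)}\subset J\overline{I^r}$; since $\mu(J)=0$ in that case, this yields $(s+1)\mu(I)\ge r\mu(I)$, forcing $r\le s+1$ and hence $1<(s+h)/r$ in the only range where the conclusion is nontrivial. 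Because there are only finitely many Rees valuations, the overall maximum remains strictly less than $(s+h)/r$.

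The main obstacle will be the bookkeeping in this last step: guaranteeing that the strict inequality enjoyed at each embedded-center Rees valuation survives when one maximizes over the full finite collection of Rees valuations, some of which may have minimal centers and therefore contribute exactly $1$ rather than a term strictly less than $(s+h)/r$. The embedded-center computation is where the strengthened hypothesis $I^{(s+1)}\subset J\overline{I^r}$ earns its keep over the plain hypothesis $I^{(s+1)}\subset\overline{I^r}$ of Theorem~\ref{thm:AsymptoticResurgenceBound}.
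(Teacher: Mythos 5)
Your proposal is correct and follows essentially the same route as the paper: apply Johnson's theorem with $a=qs$, $a_i=s$ to get $I^{(q(s+h))}\subset J^q\overline{I^{qr}}$, evaluate a Rees valuation $\nu$ with embedded center so that $\nu(J)\ge 1$ forces $\widehat{\nu}(I)\ge (1+r\nu(I))/(s+h)$ and hence $\nu(I)/\widehat{\nu}(I)<(s+h)/r$, and then invoke Theorem~\ref{thm:AsymptoticResurgenceByIntegralClosures} together with the paragraph preceding the theorem to restrict to such valuations. Your extra paragraph disposing of minimal-center Rees valuations (observing $\mu(J)=0$ because an embedded prime cannot be contained in a minimal prime, and deducing $r\le s+1$) is a careful elaboration of what the paper delegates to that preceding discussion; it is sound and, if anything, slightly more complete than the published argument.
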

\begin{proof}
As in the proof of Theorem~\ref{thm:AsymptoticResurgenceBound}, we take an integer $q$ and apply Johnson's result with $a=qs$ and $a_i=s$ for $i=1,\ldots,q$, yielding
\[
I^{(q(s+h))}=I^{(qs+qh)}\subset (I^{(s+1)})^q\subset (J\overline{I^r})^q\subset J^q\overline{I^{rq}} \mbox{ for all } q\ge 1.
\]
Now let $\nu$ be a Rees valuation whose center is among $\{P_1,\ldots,P_k\}$.  Then $\nu(J)\ge 1$.  Apply this valuation to the above containment yields
\[
\nu(I^{(q(s+h))})\ge \nu(J^q\overline{I^{rq}})=q\nu(J)+qr\nu(I).
\]
Dividing by $q(s+h)$ and taking the limit as $q\to\infty$ yields
\[
\widehat{\nu}(I)=\lim_{q\to\infty} \frac{\nu(I^{(q(s+h))})}{q(s+h)}\ge\lim_{q\to\infty} \frac{q\nu(J)+qr\nu(I)}{q(s+h)}\ge \frac{1+r\nu(I)}{s+h}.
\]
It follows that
\[
\frac{\nu(I)}{\widehat{\nu}(I)}\le \frac{(s+h)\nu(I)}{1+r\nu(I)}<\frac{s+h}{r}.
\]
Since this holds in particular for each of the finitely many Rees valuations of $I$, $\wrho(I)<\frac{s+h}{r}$ by Theorem~\ref{thm:AsymptoticResurgenceByIntegralClosures}.
\end{proof}

As a corollary we recover~\cite[Theorem~3.3]{GHM19} with a slightly weaker hypothesis.

\begin{corollary}\label{cor:strictineq}
Suppose $I$ is a radical ideal of $R$ with big height $h$ and let $J$ be the intersection of all embedded primes of $\overline{I^i}$ for some $i\gg 0$.  If $I$ is an ideal with big height $h$ and $I^{(rh-h+1)}\subset J\overline{I^r}$ for some integer $r>0$ then $\rho(I)<h$.
\end{corollary}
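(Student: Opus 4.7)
The plan is to derive Corollary~\ref{cor:strictineq} as an essentially immediate specialization of Theorem~\ref{thm:asymptoticresurgencebound2} followed by an invocation of Corollary~\ref{cor:expected}. The choice of specialization is dictated by the arithmetic: we want the bound $\frac{s+h}{r}$ produced by Theorem~\ref{thm:asymptoticresurgencebound2} to equal $h$, and solving $\frac{s+h}{r} = h$ gives $s = rh - h$. So the natural move is to set $s := rh - h$ and check the hypothesis of the theorem becomes the hypothesis in the corollary.

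First, I would verify that $I$ satisfies the setup required by Theorem~\ref{thm:asymptoticresurgencebound2}: the ideal $I$ is radical, has big height $h$, and $J$ is defined as the intersection of all embedded primes of $\overline{I^i}$ for some $i \gg 0$ (matching the theorem's hypothesis). With $s = rh-h$, the assumed containment $I^{(rh-h+1)} \subset J\overline{I^r}$ is precisely the containment $I^{(s+1)} \subset J\overline{I^r}$ from the statement of the theorem. Applying Theorem~\ref{thm:asymptoticresurgencebound2} then yields the strict inequality
\[
\widehat{\rho}(I) < \frac{s+h}{r} = \frac{rh-h+h}{r} = h.
\]

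Finally, I would appeal to Corollary~\ref{cor:expected} (or equivalently to Proposition~\ref{prop:Resurgence}), which guarantees that $\widehat{\rho}(I) < h$ implies $\rho(I) < h$. Combining these two steps produces the conclusion $\rho(I) < h$, i.e., $I$ has expected resurgence.

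I do not anticipate any genuine obstacle here; all the substantive work has already been carried out in Theorem~\ref{thm:asymptoticresurgencebound2} (the Johnson-style containment argument together with the valuation estimate) and Corollary~\ref{cor:expected} (the passage from asymptotic resurgence to resurgence via Proposition~\ref{prop:Resurgence}). The one point worth pausing on is simply to confirm that the ``$i \gg 0$'' in the definition of $J$ in the corollary is interpreted in the same way as in Theorem~\ref{thm:asymptoticresurgencebound2}, which it is, since the chain $\mathrm{Ass}(R/\overline{I^i}) \subset \mathrm{Ass}(R/\overline{I^{i+1}})$ stabilizes and so $J$ is well-defined independent of the choice of sufficiently large $i$.
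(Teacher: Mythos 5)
Your proof is correct and follows the paper's proof essentially verbatim: apply Theorem~\ref{thm:asymptoticresurgencebound2} with $s = rh-h$ to obtain $\widehat{\rho}(I) < h$, then pass from asymptotic resurgence to resurgence. (The paper cites Corollary~\ref{cor:EquivExpected} for the final step, but the relevant implication is exactly the one you invoke from Corollary~\ref{cor:expected}, so the routes coincide.)
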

\begin{proof}
Applying Theorem~\ref{thm:asymptoticresurgencebound2} gives $\wrho(I)<\frac{rh-h+h}{r}=h$.  The result now follows from Corollary~\ref{cor:EquivExpected}.
\end{proof}

\begin{remark}
Theorem~\ref{thm:asymptoticresurgencebound2} and Corollary~\ref{cor:strictineq} are more user-friendly if $J=\mathfrak{m}$, the homogeneous maximal ideal of $R$.  This happens, if, for instance, $I$ has codimension $\dim R-1$ or $I$ is a local complete intersection.
\end{remark}

\subsection{Resurgence of squarefree monomial ideals}

In~\cite[Example~8.4.5]{BDHKKSS09}, it is shown that a monomial ideal with big height $h$ satisfies $I^{(hr-h+1)}\subset I^r$ for every $r\ge 1$.  We modify the argument of~\cite[Example~8.4.5]{BDHKKSS09} to show the following.

\begin{proposition}\label{prop:sqfree}
Suppose $I\subset K[x_1,\ldots,x_\ell]$ (with $\ell\ge 2$) is a squarefree monomial ideal with big height $h\ge 2$.  Then $I^{(rh-h)}\subset I^r$ for $r\ge \ell$.
\end{proposition}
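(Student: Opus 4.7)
The plan is to adapt the inductive argument of BDHKKSS Example~8.4.5, exploiting the squarefree hypothesis to tighten the key inequality $v_P(g)\le h$. I would prove the statement by induction on $r\ge\ell$, with the inductive step modeled directly on BDHKKSS and the base case $r=\ell$ absorbing the $-1$ improvement.

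For the \emph{inductive step} ($r>\ell$): given a monomial $m\in I^{(rh-h)}$, pick any minimal generator $g$ of $I$ that divides $m$ (some generator does, since $I^{(rh-h)}\subset I$). Because $I$ is squarefree, $\operatorname{supp}(g)$ is a minimal transversal of $\operatorname{Ass}(I)$, and $v_P(g)=|\operatorname{supp}(g)\cap P|\le|P|\le h$ for every associated prime $P$. Hence $v_P(m/g)\ge (rh-h)-h=(r-1)h-h$, so $m/g\in I^{((r-1)h-h)}$; the inductive hypothesis (valid since $r-1\ge\ell$) gives $m/g\in I^{r-1}$, and therefore $m\in I^r$.

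For the \emph{base case} ($r=\ell$), I need to show $I^{((\ell-1)h)}\subset I^\ell$, which is one symbolic power short of what BDHKKSS yields directly. The starting point is the pigeonhole observation that any monomial $m=x_1^{a_1}\cdots x_\ell^{a_\ell}\in I^{((\ell-1)h)}$ must have some variable $x_j$ with $a_j\ge\ell-1$: otherwise every associated prime $P$ would satisfy $v_P(m)=\sum_{x_i\in P}a_i\le h(\ell-2)<h(\ell-1)$, a contradiction. I would then split on whether $x_j$ is itself a minimal generator of $I$. If it is (equivalently, $x_j$ lies in every associated prime), then $x_j^{\ell-1}\in I^{\ell-1}$, and for each $P$ we have $v_P(m/x_j^{\ell-1})\ge (\ell-1)h-(\ell-1)=(\ell-1)(h-1)\ge 1$ since $h\ge 2$; thus $m/x_j^{\ell-1}\in\bigcap_P P=I$, and $m\in I^\ell$. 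If $x_j$ is not a generator, then some associated prime $Q$ omits $x_j$, and I pass to the squarefree monomial ideal $I_j=I\colon x_j^\infty=\bigcap_{P\not\ni x_j}P$ in effectively $\ell-1$ variables; the image of $m/x_j^{a_j}$ lies in $I_j^{((\ell-1)h)}$, so by induction on $\ell$ it belongs to $I_j^{\ell-1}$. I then reassemble $\ell$ generators of $I$ by either keeping generators of $I_j$ that happen to already lie in $I$ or combining them with the remaining powers of $x_j$ (at least $a_j\ge\ell-1$ of which are available) to form generators of $I$.

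The main obstacle will be the case $x_j\notin\operatorname{gens}(I)$ in the base step: one must distribute the available copies of $x_j$ among the $\ell-1$ generators of $I_j$ so that each factor becomes (or remains) a generator of $I$, and show that the leftover monomial still lies in $I$ to furnish the $\ell$-th factor. The fact that $a_j\ge\ell-1$ provides exactly the right number of $x_j$'s to absorb, and the induction on $\ell$ (grounded in $\ell=2$, where $I$ must equal $(x_1,x_2)$) reduces the problem to simpler instances; the crux is verifying that the bookkeeping of $v_P$-values in this reassembly never drops below one for any associated prime of $I$.
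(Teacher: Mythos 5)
Your inductive step for $r>\ell$ is sound and is exactly the BDHKKSS argument: since $g$ is a squarefree minimal generator, $v_P(g)=|\operatorname{supp}(g)\cap P|\le |P|\le h$ for every associated prime $P$, so $v_P(m/g)\ge (r-1)h-h$ and the induction goes through. The problem is the base case $r=\ell$, precisely at the spot you flag as the ``main obstacle.'' In Case~2 ($x_j$ not a generator), after factoring $m/x_j^{a_j}=g_1\cdots g_{\ell-1}\,n'$ with each $g_i$ a generator of $I_j$, you need the leftover $x_j^{a_j-\ell+1}n'$ to lie in $I$, i.e.\ in every associated prime $Q$. For a prime $Q$ with $x_j\notin Q$ the available estimate gives only $v_Q(n')=v_Q(m)-\sum_i v_Q(g_i)\ge (\ell-1)h-(\ell-1)h=0$, which does not put $n'$ in $Q$. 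And this failure is not hypothetical: for $I=(xy,xz,yz)\subset K[x,y,z]$, $m=x^2y^2z^2\in I^{(4)}$, $a_1=2=\ell-1$, $I_x=(y,z)$, the factorization $y^2z^2=y\cdot y\cdot z^2$ gives $(xy)(xy)\cdot z^2$ with leftover $z^2\notin I$, whereas $y^2z^2=y\cdot z\cdot(yz)$ does succeed. So the reassembly hinges on choosing a \emph{good} factorization in $I_j^{\ell-1}$, and your proposal gives no argument that such a choice always exists. As written, the proof stalls exactly there.

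The paper sidesteps this entirely with a non-inductive argument that treats all $r\ge\ell$ uniformly. Writing $I=\bigcap_i Q_i$ into monomial primes, one shows $Q_i^{rh-h}\subset Q_i^{[r]}+\langle\pi(Q_i)^{r-1}\rangle$ (your pigeonhole estimate, applied to each $Q_i$ separately), then distributes $\bigcap_i$ over these sums using Equation~\eqref{eq:intdist} and collapses each summand into $I^r$ via Lemma~\ref{lem:pi} (which is where $r\ge\ell$ enters, since $\deg\pi(\bigcap_{j\notin S}Q_j)\le\ell$) and Lemma~\ref{lem:powerandbracket}. The role that your delicate ``reassembly'' is supposed to play is carried by Lemma~\ref{lem:powerandbracket}, $I^r\cap J^{[r]}\subset (I\cap J)^r$ for squarefree $I,J$, which packages the bookkeeping of how factors combine across components. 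If you want to rescue your approach, you would essentially need to prove a statement of this kind; it is not something that falls out of the induction for free.
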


\begin{remark}
Proposition~\ref{prop:sqfree} is tight in the sense that there are squarefree monomial ideals so that $I^{(rh-h)}\not\subset I^r$ for $r<\ell$.  See Example~\ref{ex:LateContainment}.
\end{remark}

Before proving Proposition~\ref{prop:sqfree}, we record a couple of lemmas.  If $I$ is a monomial ideal, we denote by $\pi(I)$ the least common multiple of the generators of $I$ and by $I^{[r]}$ the ideal generated by $r$th powers of generators of $I$.  Let $\gens(I)$ denote the set of minimal generators of $I$.  Then $\gens(I+J)=\gens(I)\cup \gens(J)$ and $\gens(I\cap J)=\{\mbox{lcm}(m,n):m\in\gens(I),n\in\gens(J)\}$.  Suppose $I,J,$ and $K$ are monomial ideals.  Then the following facts are easily verified.
\begin{align}
\label{eq:intdist}
I\cap (J+K) = & I\cap J+I\cap K\\
\label{eq:pi}
\pi(I\cap J) = & \mbox{lcm}(\pi(I),\pi(J))=\langle \pi(I)\rangle\cap \langle \pi(J)\rangle\\
\label{eq:bracketdist}
(I\cap J)^{[r]} = & I^{[r]}\cap J^{[r]}.
\end{align}

\begin{lemma}\label{lem:pi}
Suppose $I$ is a squarefree monomial ideal with codimension at least $2$ in $K[x_1,\ldots,x_\ell]$.  If $\pi(I)$ has degree $d$ then $\pi(I)^{r-1}\in I^r$ for $r\ge d$.
\end{lemma}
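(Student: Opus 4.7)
The plan is to exhibit, for each $r \ge d$, a product of $r$ minimal generators of $I$ that divides the monomial $\pi(I)^{r-1}$; since $I^r$ is a monomial ideal, such a product witnesses $\pi(I)^{r-1} \in I^r$. Write $\pi(I) = x_{i_1} \cdots x_{i_d}$ (this is squarefree since $I$ is), so that $x_{i_1}, \ldots, x_{i_d}$ are exactly the variables dividing some minimal generator of $I$.

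The first step is to translate the codimension hypothesis into a combinatorial statement about generators. Since $I$ is squarefree (hence radical) and the minimal primes of a squarefree monomial ideal are generated by subsets of the variables, codimension at least $2$ forces that no minimal prime of $I$ has the form $(x_j)$; equivalently, for each $j \in \{1, \ldots, d\}$ there is a minimal generator $h_j$ of $I$ not divisible by $x_{i_j}$. Next, I would form the product $g_1 \cdots g_r$ by setting $g_j = h_j$ for $1 \le j \le d$ and taking $g_{d+1}, \ldots, g_r$ to be arbitrary minimal generators of $I$ (this is where the hypothesis $r \ge d$ is used). Each factor divides $\pi(I)$, so $g_1 \cdots g_r$ is a monomial in the variables $x_{i_1}, \ldots, x_{i_d}$. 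For each index $j$, the factor $g_j = h_j$ omits $x_{i_j}$, so at most $r-1$ of the $r$ factors can contribute an $x_{i_j}$; hence $x_{i_j}$ appears in $g_1 \cdots g_r$ to degree at most $r-1$. Therefore $g_1 \cdots g_r$ divides $\pi(I)^{r-1}$, and since $g_1 \cdots g_r \in I^r$ we conclude $\pi(I)^{r-1} \in I^r$.

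The only delicate point is the codimension-to-combinatorics translation in the first step; once we have an ``avoiding'' generator $h_j$ for each variable $x_{i_j}$, the rest is a routine counting argument. The bound $r \ge d$ enters precisely because we need one such avoiding factor for each of the $d$ variables of $\pi(I)$, after which the remaining $r-d$ slots can be filled arbitrarily.
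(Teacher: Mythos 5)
Your proposal is correct and takes essentially the same approach as the paper: in both, codimension at least $2$ is used to produce, for each variable $x_{i_j}$ dividing $\pi(I)$, a squarefree generator avoiding $x_{i_j}$, and the exponent count on the product of generators is compared to $\pi(I)^{r-1}$. The only cosmetic difference is that the paper first establishes the base case $\pi(I)^{d-1}\in I^d$ and then multiplies by $\pi(I)^{r-d}\in I^{r-d}$, whereas you fill the remaining $r-d$ factors with arbitrary generators and repeat the exponent count directly for general $r$.
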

\begin{proof}
Since $I$ is squarefree, so is $\pi(I)$.  For every variable $x_i$ which divides $\pi(I)$, we can pick a monomial $m_i$ which is not divisible by $x_i$ (since $I$ has codimension at least $2$).  Since $m_i$ divides $\pi(I)$ for every $i$, we have $\prod_{i=1}^d m_i\mid \pi(I)^d/\pi(I)=\pi(I)^{d-1}$.  Thus $\pi(I)^{d-1}\in I^d$.  Since $\pi(I)\in I$, $\pi(I)^{r-1}\in I^r$ for every $r\ge d$.
\end{proof}

\begin{lemma}\label{lem:powerandbracket}
Suppose $I$ and $J$ are squarefree monomial ideals.  Then $I^{r}\cap J^{[r]}\subset (I\cap J)^r$.
\end{lemma}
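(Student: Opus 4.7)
The plan is to reduce to monomials and then exhibit, for any given $m\in I^r\cap J^{[r]}$, an explicit product of generators of $I\cap J$ that divides $m$. Since $I^r$ and $J^{[r]}$ are monomial ideals, so is their intersection, hence it suffices to prove the containment for monomials $m$. The hypothesis $m\in J^{[r]}$ produces a generator $g$ of $J$ with $g^r\mid m$, while $m\in I^r$ gives a factorization $m=f_1f_2\cdots f_r\cdot v$ for some (not necessarily distinct) generators $f_1,\ldots,f_r$ of $I$ and a monomial $v$.

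I would next use the description $\gens(I\cap J)=\{\mbox{lcm}(\mu,\nu):\mu\in\gens(I),\nu\in\gens(J)\}$ recalled in the excerpt and set $h_i:=\mbox{lcm}(f_i,g)$ for $i=1,\ldots,r$. Each $h_i$ is a generator of $I\cap J$, so $h_1h_2\cdots h_r\in(I\cap J)^r$, and the problem reduces to showing $h_1\cdots h_r\mid m$. I would verify this one variable at a time. Fix a variable $x$ and write $a_i,b,c$ for the $x$-exponents of $f_i,g,m$ respectively; the $x$-exponent of $\prod_i h_i$ is $\sum_{i=1}^r\max(a_i,b)$. If $x$ does not divide $g$, then $b=0$ and this sum equals $\sum_i a_i\le c$ because $f_1\cdots f_r$ divides $m$. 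If $x$ does divide $g$, then $b=1$; squarefreeness of $f_i$ and $g$ forces each $\max(a_i,1)=1$, so the sum equals $r$, and $g^r\mid m$ gives $c\ge r$. In either case the $x$-exponent of $\prod_i h_i$ is bounded by $c$, so $\prod_i h_i\mid m$, completing the proof.

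I do not anticipate any genuine obstacle here: the squarefree hypothesis is doing all the work, since it forces $a_i,b\in\{0,1\}$ and makes the $\max$ comparison collapse uniformly in the second case. The only bookkeeping required is the initial reduction to monomials and the invocation of the description of $\gens(I\cap J)$ recorded just before the lemma.
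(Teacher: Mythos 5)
Your proof is correct and follows essentially the same route as the paper: both arguments reduce to the variable-by-variable exponent computation showing $\max\bigl\{\sum_i a_i,\, br\bigr\}=\sum_i\max\{a_i,b\}$ when all exponents lie in $\{0,1\}$, and both identify $\prod_i\mathrm{lcm}(f_i,g)$ as the witness element of $(I\cap J)^r$. The only cosmetic difference is that the paper phrases this as an exact equality $\mathrm{lcm}(g_1\cdots g_r,h^r)=\prod_i\mathrm{lcm}(g_i,h)$ on generators, whereas you phrase it as a divisibility bound for an arbitrary monomial of the intersection.
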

\begin{proof}
We have $\gens(I^r\cap J^{[r]})=\{\mbox{lcm}(g_1\ldots g_r,h^r): g_1,\ldots,g_r\in I\mbox{ and } h\in J\}$.  It suffices to show that $\mbox{lcm}(g_1\ldots g_r,h^r)=\prod_{i=1}^r \mbox{lcm}(g_i,h)$, since $\mbox{lcm}(g_i,h)\in I\cap J$.  Let $a_{ij}$ be the exponent of $x_i$ in $g_j$ and $b_i$ be the exponent of $x_i$ in $h$.  The exponent of $x_i$ in $\mbox{lcm}(g_1\ldots g_r,h^r)$ is $\max\{\sum_j a_{ij},b_ir\}$.  Since each exponent $a_{ij}$ is either $0$ or $1$, and each $b_i$ is either $0$ or $1$ as well, $\max\{\sum_j a_{ij},b_ir\}=\sum_j \max\{a_{ij},b_i\}$.  The latter is the exponent of $x_i$ in $\prod_{i=1}^r \mbox{lcm}(g_i,h)$.  Since this holds for every variable $x_i$, $\mbox{lcm}(g_1\ldots g_r,h^r)=\prod_{i=1}^r \mbox{lcm}(g_i,h)$ and the result follows.
\end{proof}

\begin{example}
Let $I=\langle x^2z,yz,y^2\rangle$ and $J=\langle x,z^2\rangle$.  The ideal $I\cap J=\langle xyz,xy^2,x^2z,yz^2\rangle$ appears in~\cite[Example~4.6]{CEHH17}. The monomial $x^{2k}y^kz^{2k}$ is in $I^{2k}\cap J^{[2k]}$ but not in $(I\cap J)^{2k}$ (or even its integral closure by degree considerations) for any $k\ge 1$.  Likewise the monomial $x^{2k+2}y^kz^{2k+1}$ is in $I^{2k+1}\cap J^{[2k+1]}$ but not in $(I\cap J)^{2k+1}$ (or its integral closure) for any $k\ge 1$.  So the containment in Lemma~\ref{lem:powerandbracket} can fail for every $r\ge 1$ if $I$ and $J$ are not squarefree, even if the right hand side is replaced by the integral closure of $(I\cap J)^r$.
\end{example}

\begin{proof}[Proof of Proposition~\ref{prop:sqfree}]
First suppose that $I$ has codimension at least two.  Let $I=\cap_{i=1}^p Q_i$ be a decomposition of $I$ where each ideal $Q_i$ is a monomial prime ideal, thus generated by a subset of the variables $x_1,\ldots,x_\ell$.  Since $I$ has big height $h$, each $Q_i$ is generated by at most $h$ variables.  We have $I^{(rh-h)}=\cap_{i=1}^p Q_i^{rh-h}$.  Since $Q_i$ is generated by at most $h$ variables, $Q_i^{rh-h}\subset Q_i^{[r]}+\langle \pi(Q_i^{r-1})\rangle$ for every $i=1,\ldots,p$.  Put $[p]=\{1,\ldots,p\}$.  We have 
\[
\begin{array}{rll}
I^{(rh-h)}= & \bigcap\limits_{i=1}^p Q_i^{rh-h} &\\[10 pt]
\subset & \bigcap\limits_{i=1}^p (Q_i^{[r]}+\langle \pi(Q_i^{r-1})\rangle) & \\[10 pt]
=&\sum\limits_{S\subset [p]} \big(\bigcap\limits_{i\in S} Q_i^{[r]} \big)\cap \big( \bigcap\limits_{j\notin S} \langle \pi(Q_j)^{r-1}\rangle \big) & \mbox{by~\eqref{eq:intdist}}\\[10 pt]
=&\sum\limits_{S\subset [p]} \big(\bigcap\limits_{i\in S} Q_i \big)^{[r]}\cap \langle \pi\big(\bigcap\limits_{j\notin S} Q_j\big)^{r-1}\rangle & \mbox{by~\eqref{eq:bracketdist},~\eqref{eq:pi}}\\[10 pt]
\subset & \sum\limits_{S\subset [p]} \big(\bigcap\limits_{i\in S} Q_i \big)^{[r]}\cap \big( \bigcap\limits_{j\notin S} Q_j\big)^{r} & \mbox{by Lemma~\ref{lem:pi}}\\[10 pt]
\subset & I^r & \mbox{by Lemma~\ref{lem:powerandbracket}}.
\end{array}
\]
Finally, if $I$ has codimension one and $\ell\ge 2$, put $m=\gcd(\gens (I))$.  Then $I=\langle m\rangle \cap I'$, where $I'$ has codimension at least two.  So $I^{(rh-h)}=\langle m^{rh-h}\rangle \cap (I')^{(rh-h)}\subset \langle m^r\rangle \cap (I')^{r}=I^r$.
\end{proof}

\begin{corollary}\label{cor:sqfree}
If $I$ is a squarefree monomial ideal in $K[x_1,\ldots,x_\ell]$ with big height $h\ge 2$ then $\rho(I)<h$ and $\wrho(I)\le \frac{h\ell-1}{\ell}=h-\frac{1}{\ell}$.
\end{corollary}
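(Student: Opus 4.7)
The plan is to combine Proposition~\ref{prop:sqfree} with Theorem~\ref{thm:AsymptoticResurgenceBound} and then apply Corollary~\ref{cor:expected} to pass from the asymptotic resurgence bound to a bound on the resurgence itself. Since a squarefree monomial ideal is radical, the hypotheses of Theorem~\ref{thm:AsymptoticResurgenceBound} and Corollary~\ref{cor:expected} are met, so there is no obstacle in applicability.

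Concretely, I would first invoke Proposition~\ref{prop:sqfree} at the smallest allowed value $r=\ell$, which yields the single containment
\[
I^{(\ell h - h)} \subset I^\ell \subset \overline{I^\ell}.
\]
This is exactly a containment of the form $I^{(s+1)}\subset \overline{I^r}$ required in Theorem~\ref{thm:AsymptoticResurgenceBound}, with $s=\ell h - h - 1$ and $r=\ell$. Plugging these values into the conclusion $\wrho(I)\le \frac{s+h}{r}$ of Theorem~\ref{thm:AsymptoticResurgenceBound} gives
\[
\wrho(I)\le \frac{(\ell h - h - 1)+h}{\ell} = \frac{\ell h - 1}{\ell} = h - \frac{1}{\ell},
\]
which is the desired asymptotic resurgence bound.

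Finally, since $h\ge 2$ and $\ell\ge 2$, we have $h - \frac{1}{\ell} < h$, so $\wrho(I) < h$. Corollary~\ref{cor:expected} then immediately upgrades this to $\rho(I) < h$, completing the proof. The argument is essentially a two-line consequence of the machinery developed earlier in the section; the only ``work'' is verifying the numerics, and the main substantive input has already been carried out in Proposition~\ref{prop:sqfree}. There is no real obstacle here beyond ensuring that the hypothesis $r\ge \ell$ of Proposition~\ref{prop:sqfree} is compatible with producing the sharpest bound, which is why choosing $r=\ell$ (rather than any larger $r$) is natural.
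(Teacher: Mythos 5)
Your proof is correct and is essentially the same argument the paper gives: Proposition~\ref{prop:sqfree} supplies the containment $I^{(\ell h-h)}\subset I^\ell\subset\overline{I^\ell}$, Theorem~\ref{thm:AsymptoticResurgenceBound} converts it to $\wrho(I)\le h-\frac{1}{\ell}$, and the expected-resurgence criterion yields $\rho(I)<h$. The paper cites Corollary~\ref{cor:EquivExpected} rather than Corollary~\ref{cor:expected}, but the former is proved from the latter, so the routes are the same.
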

\begin{proof}
This follows from Proposition~\ref{prop:sqfree} by Theorem~\ref{thm:AsymptoticResurgenceBound} and Corollary~\ref{cor:EquivExpected}.
\end{proof}

\begin{example}
Proposition~\ref{prop:sqfree} does not hold for monomial ideals which are not squarefree.  Let $I=\langle x^4y^2,x^3y^3,x^4z,y^3z^2\rangle$.  Then $I$ has big height $2$ (this will be clear shortly) and $m=x^9y^6z^2$ is the only generator of $I^{(4)}$ which is not contained in $I^3$ (easily checked in \textsc{Macaulay2}).  However $m^2\in I^6$ and hence $I^{(4)}\subset \overline{I^3}$.  Moreover we also have $I^{(6)}\subset I^4$.

The argument used in the proof of Proposition~\ref{prop:sqfree} also fails to show that $I^{(4)}\subset \overline{I^3}$.  To see this, write $Q_1=\langle x^4,y^3\rangle,Q_2=\langle x^3,z^2\rangle,$ and $Q_3=\langle y^2,z\rangle$.  Then $I=Q_1\cap Q_2\cap Q_3$ and $\pi(Q_1)=x^4y^3,\pi(Q_2)=x^3z^2,$ and $\pi(Q_3)=y^2z$.  Write $T_r$ for the intersection $\cap_{i=1}^3 (Q_i^{[r]}+\langle \pi(Q_i^{r-1}) )$ which appears in the proof of Proposition~\ref{prop:sqfree}.  Then
$
T_r=\langle x^{4r},y^{3r},x^{4r-4}y^{3r-3}\rangle\cap\langle x^{3r},z^{2r},x^{3r-3}z^{2r-2}\rangle\cap\langle y^{2r},z^r,y^{2r-2}z^{r-1}\rangle.
$
The monomial $x^{4r-4}y^{3r-3}$ is in $T_r$ for every $r$ but is not in $I^r$ until $r=7$.  Degree considerations for the same monomial show that $T_r\not\subset \overline{I^r}$ until $r=7$ also.  Thus a different argument is needed to show $I^{(4)}\subset \overline{I^3}$.
\end{example}

\begin{question}\label{ques:containment}
If $I$ is a monomial ideal with big height $h$ in $R=K[x_1,\ldots,x_\ell]$, is there a fixed integer $\beta$, depending \textit{only} on $R$, so that $I^{(hr-h)}\subset \overline{I^r}$ for $r\ge\beta$?  Can we take $\beta=\ell$?  We ask the same questions if $I$ is any ideal of $R$.
\end{question}

Question~\ref{ques:containment} is sensitive to characteristic.  If $K=\mathbb{F}_q$, the ideal $I$ of all but one of the $\mathbb{F}_q$-points in $\mathbb{P}^{\ell-1}(\mathbb{F}_q)$ has codimension $\ell-1$ in $K[x_1,\ldots,x_\ell]$ and $\wrho(I)=\rho(I)=\frac{(\ell-1)q-(\ell-1)+1}{q}$~\cite[Theorem~3.2]{DHNSST15}.  Hence, by Theorem~\ref{thm:AsymptoticResurgenceBound}, $I^{(hr-h)}\not\subset \overline{I^r}$ for $r<\frac{q}{\ell-2}$.  Thus the integer referred to as $\beta$ in Question~\ref{ques:containment}, if it exists, may depend on properties of $R$ besides its dimension.

\section{Additional Remarks}\label{sec:remarks}

\begin{remark}\label{rem:RationalResurgence}
The converse of Theorem~\ref{thm:Main} is false: suppose that $s$ is a positive integer, $d=k^2$ where $k$ is a positive integer, and $I$ is the homogeneous ideal in $K[x,y,z]$ defining a set of $d=\binom{s+2}{2}$ distinct generic points.  Nagata shows that the symbolic Rees algebra of $I$ is not finitely generated when $d>9$~\cite{N59}, but Bocci and Harbourne compute that its resurgence is $\rho(I)=\frac{s+1}{k}$~\cite[Corollary~1.3.1]{BH10} (this also coincides with the asymptotic resurgence).  The smallest example of this is the ideal of $36=\binom{9}{2}$ general points in $\mathbb{P}^2$, which has resurgence of $\rho(I)=\frac{8}{6}=\frac{4}{3}$.
\end{remark}

\begin{remark}\label{rem:IrrationalResurgence}
Remark~\ref{rem:RationalResurgence} also presents a natural place to look for \textit{irrational} resurgence.  Bocci and Harbourne explain in~\cite{BH10} that a (still open) conjecture of Nagata would imply that the ideal $I$ of $d=\binom{s+2}{2}$ generic points in $\mathbb{P}^2$ has resurgence $\rho(I)=\frac{s+1}{\sqrt{d}}$ for $n>9$.  (As Remark~\ref{rem:RationalResurgence} indicates, this is known if $d$ is square.)
\end{remark}

\begin{remark}\label{rem:NoetherianSymbolicReesAlgebra}
In general it is quite difficult to determine if the symbolic Rees algebra of an ideal is Noetherian, but this is known for some interesting classes of ideals.  In~\cite{Lyu88} Lyubeznik shows that $R_s(I)$ is Noetherian if $I$ is a squarefree monomial ideal.  This is extended to arbitrary monomial ideals by Herzog, Hibi, and Trung in~\cite{HHT07}.  The Noetherian property of the symbolic Rees algebra of a monomial curve has also been extensively studied.  An example of an ideal $I$ defining a monomial curve so that $R_s(I)$ is not Noetherian is provided in~\cite{MG92}.  Cutkosky gives several criteria for monomial curves in~\cite{Cut91}; perhaps the simplest of these is that if $P$ is the kernel of the homomorphism $k[x,y,z]\to k[t]$ with $x\to t^a,y\to t^b,z\to t^c$, then $R_s(P)$ is Noetherian if $(a+b+c)^2>abc$.
\end{remark}

\begin{remark}\label{rem:Q6}
	Following up from Proposition~\ref{prop:Resurgence} and the discussion at the beginning of Section~\ref{sec:main}, it seems natural to ask when Equation~\eqref{eq:finite} holds.  The following example, which comes from combinatorial optimization, shows that Equation~\eqref{eq:finite} can hold even when $\rho(I)=\widehat{\rho}(I)=1$.
	
	Consider the squarefree monomial ideal $I=\langle abd,ace,bcf,def\rangle$ in the polynomial ring $K[a,b,c,d,e,f]$.  One can show, for instance using vertex cover algebras from~\cite{HHT07}, that $I^{(r)}=I^r+(abcdef)I^{r-2}$ for $r\ge 2$.  From this equality it is straightforward to show that $I^{(r)}\neq I^r$ for $r\ge 2$ but $I^{(r+1)}\subset I^r$ for $r\ge 1$.  Thus $\rho(I)=1$, and is attained as a maximum (indeed we can take $M=N=2$ in Equation~\eqref{eq:finite}).  This example also provides a negative answer to~\cite[Question~5.2]{BH10b}.
\end{remark}

\begin{remark}\label{rem:NonPolynomialRings}
Our results hold in more generality than for polynomial rings over a field.  The arguments of~\cite[Section~4]{DFMS18} are stated for an ideal $I$ in a polynomial ring $R$, but all that is needed is module-finiteness of integral closures (giving an integer $k$ so that $\overline{I^{r+k}}\subset I^r$ for all $r\ge 1$) and the equivalence of the symbolic and adic topologies of the ideal $I$.  (For the existence of Rees valuations we only need $R$ to be Noetherian~\cite[Theorem~10.2.2]{IntegralClosure}.)  Theorem~\ref{thm:AsymptoticResurgenceByIntegralClosures}, and hence Theorem~\ref{thm:Main}, requires no additional hypotheses beyond these.  For most results of Section~\ref{sec:Expected} we need $R$ to be a regular ring containing a field as this is required to apply~\cite[Theorem~4.4]{J14}.
\end{remark}


\begin{remark}\label{rem:analyticspread}
The results in Section~\ref{sec:Expected} can be extended to non-radical ideals if the big height $h$ of the ideal $I$ is replaced by the largest analytic spread of $IR_P$ for any associated prime $P$ of $I$.  This is what Johnson denotes by $h$ in~\cite{J14}, and it coincides with the big height of $I$ if $I$ is radical.
\end{remark}

\begin{remark}[Additional bounds on asymptotic resurgence and resurgence]
Let $I$ be a radical ideal with big height $h$ and let $b(I)$ be the smallest of the integers $k$ so that $\overline{I^{r+k}}\subset I^r$ (the Brian\c{c}on-Skoda theorem implies that $b(I)\le n-1$, where $n$ is the number of variables of $R$).  If $I^{(s)}\not\subset I^r$ then $I^{(s)}\not\subset \overline{I^{r+b(I)}}$ and, applying part (1) of~\cite[Lemma~4.12]{DFMS18}, $\frac{s}{r+b(I)}<\wrho(I)$.  Conversely, if $I^{(s)}\subset I^r$ then $\wrho(I)\le \frac{s-1+h}{r}$ by Theorem~\ref{thm:AsymptoticResurgenceBound}.  For a fixed $r$, suppose $\lambda_r$ is the largest of the integers $s$ with $I^{(s)}\not\subset I^r$, so $I^{(\lambda_r)}\not\subset I^r$ and $I^{(\lambda_r+1)}\subset I^r$.  Then
\[
\frac{\lambda_r}{r+b(I)}<\wrho(I)\le \frac{\lambda_r+h}{r}.
\]
Thus $\wrho(I)$ can be approximated arbitrarily accurately provided that one can compute the integers $\lambda_r$.  (The difference between the bounds is on the order of $h/r$, so the bounds convergence to the asymptotic resurgence fairly slowly).  In particular, we see that $\wrho(I)=\lim_{r\to\infty}\frac{\lambda_r}{r}$.

Now for any positive integer $r$ put $K_r=\min\{s:\overline{I^s}\subset I^r\}$ and let $K(I)=\max\{\frac{K_r}{r}\}$.  By~\cite[Proposition~4.19]{DFMS18}, $\rho(I)\le\wrho(I)K(I)$.  With $b(I)$ defined as above, $K_r\le r+b(I)$.  Since $I$ is radical, it is integrally closed, so $K_1=1$.  Thus $K(I)\le 1+\frac{b(I)}{2}$.  Clearly if $I^{(s)}\not\subset I^r$ then $\frac{s}{r}\le \rho(I)$.  If $I^{(s)}\subset I^r$ then $\wrho(I)\le \frac{s-1+h}{r}$ and $\rho(I)\le \frac{s-1+h}{r}(1+\frac{b(I)}{2})\le \frac{s-1+h}{r}(1+\frac{n-1}{2})$, where $n$ is the number of variables of $R$.  Again letting $\lambda_r$ be as above, we have
\[
\frac{\lambda_r}{r}\le \rho(I)\le \frac{\lambda_r+h}{r}\left(1+\frac{b(I)}{2}\right).
\]
We have $\rho(I)=\sup\{\lambda_r/r\}$ by definition of $\rho(I)$, but we should not expect $\rho(I)=\inf\{(\lambda_r+h)/r (1+b(I)/2)\}=\wrho(I)(1+b(I)/2)$; for instance if $\rho(I)=\wrho(I)$ then this value will be off by a factor of $(1+b(I))/2$.  Even if $\wrho(I)<\rho(I)$ this can be too large.  Consider the first ideal $I$ of Example~\ref{ex:resurgencefromasymptoticresurgence}, where $\rho(I)=3/2,\wrho(I)=9/7,$ and $b(I)=1$.  Then $\wrho(I)(1+b(I)/2)=(9/7)(3/2)>\rho(I)$.
\end{remark}

\begin{remark}\label{rem:improvement}
Let $I\subset R$ be a radical ideal with big height $h$ and $J$ the intersection of all embedded primes of $\overline{I^i}$ for some $i\gg 0$.  Using Corollary~\ref{cor:expected} and the same techniques as in the proof of Theorem~\ref{thm:asymptoticresurgencebound2}, we can recover~\cite[Corollary~2.8]{GHM19} with the following slightly weaker hypotheses: if there is an increasing sequence of positive integers $\{r_i\}_{i=1}^\infty$, a sequence of positive integers $\{\gamma_i\}_{i=1}^\infty$, and a non-negative sequence of integers $\{f_i\}_{i=1}^\infty$ satisfying
\[
\lim\limits_{i\to\infty} \dfrac{\gamma_i}{r_i}=1,\;\;\; \lim\limits_{i\to \infty} \dfrac{f_i}{r_i}=\lim\limits_{i\to\infty} \dfrac{f_i}{\gamma_i}>0,\;\;\; \mbox{ and } I^{(\gamma_i h)}\subset J^{f_i} \overline{I^{r_i}},
\]
then $\rho(I)<h$.
\end{remark}

\begin{remark}
We discuss a circle of ideas related to~\cite{BGTT20}, where it is shown that a single containment implies Chudnovsky's conjecture for a general set of points in $\mathbb{P}^n$.  Chudnovsky's conjecture~\cite{C81} for ideals of points in $\mathbb{P}^{\ell}$ is equivalent to the inequality $\widehat{\alpha}(I)\ge \frac{\alpha(I)+\ell-1}{\ell}$, where $\alpha:R=K[x_0,\ldots,x_\ell]\to\Z$ is the $R$-valuation defined by $\alpha(f)=\deg(f)$ on homogeneous polynomials.  Using Theorem~\ref{thm:AsymptoticResurgenceByIntegralClosures}, we see that a Chudnovsky-type inequality of the form $\widehat{\nu}(I)\ge \frac{\nu(I)+C}{h}$ for some integer $C$ and every $R$-valuation $\nu$ implies that the asymptotic resurgence of $I$ is bounded away from $h$ (see also~\cite[Question~4.13]{HKZ20}).  Following the lead of~\cite{HH13}, we see that an inequality of type $\widehat{\nu}(I)\ge \frac{\nu(I)+C}{h}$ for some positive integer $C$ is implied by a containment of the type $I^{(rh)}\subset J^{rC} \overline{I^r}$ for infinitely many $r> 0$, where $J$ is the intersection of all primes associated to the integeral closure of some power of $I$ which are not minimal primes of $I$ (as in Remark~\ref{rem:improvement}).

Following the same reasoning as in the proof of Theorem~\ref{thm:asymptoticresurgencebound2}, we can show the following.  Suppose $I^{(s+1)}\subset J^{rC} \overline{I^r}$ for some integers $s,r,C\ge 1$.  Then
\[
\widehat{\nu}(I)\ge \frac{rh}{s+h}\frac{\nu(I)+C}{h}.
\]
In particular, if $I^{(hr-h+1)}\subset J^{rC}\overline{I^r}$ for a single positive integer $r$ then $\widehat{\nu}(I)\ge \frac{\nu(I)+C}{h}$ for every $R$-valuation $\nu$ supported on $I$.  See also~\cite[Proposition~5.3]{BGTT20}.

\end{remark}

\begin{remark}
In each of the examples considered in this paper (for instance Examples~\ref{ex:resurgencefromasymptoticresurgence} and~\ref{ex:3triangles}) it is true that $\overline{I^{r+1}}\subset I^r$ for every $r\ge 1$.  As pointed out in~\cite[Question~4.6]{HKZ20}, it would be interesting to have an example of a radical ideal where $\overline{I^{r+1}}\not\subset I^r$ for some $r\ge 1$.
\end{remark}

\section{Acknowledgements}
We thank Daniel Hern\'{a}ndez for commenting that $\rho(I)$ might be a maximum of finitely many values if $\wrho(I)<\rho(I)$, Alexandra Seceleanu and Elo\'{i}sa Grifo for suggesting a connection to expected resurgence, and Alexandra Seceleanu and Brian Harbourne for helpful comments on earlier drafts.  We also thank Vivek Mukundan, Adam Van Tuyl, Chris Francisco, Jay Schweig, Jeff Mermin, and Elena Guardo for their comments.

%


\begin{thebibliography}{10}
	
	\bibitem{BDHHSS17}
	Thomas Bauer, Sandra Di~Rocco, Brian Harbourne, Jack Huizenga, Alexandra
	Seceleanu, and Tomasz Szemberg.
	\newblock Negative curves on symmetric blowups of the projective plane,
	resurgences, and {W}aldschmidt constants.
	\newblock {\em Int. Math. Res. Not. IMRN}, (24):7459--7514, 2019.
	
	\bibitem{BDHKKSS09}
	Thomas Bauer, Sandra Di~Rocco, Brian Harbourne, Micha\l Kapustka, Andreas
	Knutsen, Wioletta Syzdek, and Tomasz Szemberg.
	\newblock A primer on {S}eshadri constants.
	\newblock In {\em Interactions of classical and numerical algebraic geometry},
	volume 496 of {\em Contemp. Math.}, pages 33--70. Amer. Math. Soc.,
	Providence, RI, 2009.
	
	\bibitem{BGTT20}
	Sankhaneel {Bisui}, Elo{\'\i}sa {Grifo}, Huy {T{\`a}i H{\`a}}, and Th{\'a}i
	{Th{\`a}nh Nguy{\^e}n}.
	\newblock {Chudnovsky's Conjecture and the stable Harbourne-Huneke
		containment}.
	\newblock {\em arXiv e-prints}, page arXiv:2004.11213, April 2020.
	
	\bibitem{Waldschmidt16}
	Cristiano Bocci, Susan Cooper, Elena Guardo, Brian Harbourne, Mike Janssen, Uwe
	Nagel, Alexandra Seceleanu, Adam Van~Tuyl, and Thanh Vu.
	\newblock The {W}aldschmidt constant for squarefree monomial ideals.
	\newblock {\em J. Algebraic Combin.}, 44(4):875--904, 2016.
	
	\bibitem{BH10}
	Cristiano Bocci and Brian Harbourne.
	\newblock Comparing powers and symbolic powers of ideals.
	\newblock {\em J. Algebraic Geom.}, 19(3):399--417, 2010.
	
	\bibitem{BH10b}
	Cristiano Bocci and Brian Harbourne.
	\newblock The resurgence of ideals of points and the containment problem.
	\newblock {\em Proc. Amer. Math. Soc.}, 138(4):1175--1190, 2010.
	
	\bibitem{Normaliz}
	W.~Bruns, B.~Ichim, T.~R\"omer, R.~Sieg, and C.~S\"oger.
	\newblock Normaliz. algorithms for rational cones and affine monoids.
	\newblock Available at \texttt{https://www.normaliz.uni-osnabrueck.de}.
	
	\bibitem{C81}
	G.~V. Chudnovsky.
	\newblock Singular points on complex hypersurfaces and multidimensional
	{S}chwarz lemma.
	\newblock In {\em Seminar on {N}umber {T}heory, {P}aris 1979--80}, volume~12 of
	{\em Progr. Math.}, pages 29--69. Birkh\"{a}user, Boston, Mass., 1981.
	
	\bibitem{CEHH17}
	Susan~M. Cooper, Robert J.~D. Embree, Huy~T\`ai H\`a, and Andrew~H. Hoefel.
	\newblock Symbolic powers of monomial ideals.
	\newblock {\em Proc. Edinb. Math. Soc. (2)}, 60(1):39--55, 2017.
	
	\bibitem{Cut91}
	Steven~Dale Cutkosky.
	\newblock Symbolic algebras of monomial primes.
	\newblock {\em J. Reine Angew. Math.}, 416:71--89, 1991.
	
	\bibitem{Denk}
	Annika Denkert.
	\newblock Results on containments and resurgences, with a focus on ideals of
	points in the plane.
	\newblock 2013.
	
	\bibitem{DFMS18}
	Michael DiPasquale, Christopher~A. Francisco, Jeffrey Mermin, and Jay Schweig.
	\newblock Asymptotic resurgence via integral closures.
	\newblock {\em Trans. Amer. Math. Soc.}, 372(9):6655--6676, 2019.
	
	\bibitem{SymbolicPowers}
	Ben Drabkin, Elo\'{\i}sa Grifo, Alexandra Seceleanu, and Branden Stone.
	\newblock Calculations involving symbolic powers.
	\newblock {\em J. Softw. Algebra Geom.}, 9(1):71--80, 2019.
	
	\bibitem{DG18}
	Benjamin Drabkin and Lorenzo Guerrieri.
	\newblock Asymptotic invariants of ideals with {N}oetherian symbolic {R}ees
	algebra and applications to cover ideals.
	\newblock {\em J. Pure Appl. Algebra}, 224(1):300--319, 2020.
	
	\bibitem{DHNSST15}
	M.~Dumnicki, B.~Harbourne, U.~Nagel, A.~Seceleanu, T.~Szemberg, and
	H.~Tutaj-Gasi\'nska.
	\newblock Resurgences for ideals of special point configurations in
	{$\bold{P}^N$} coming from hyperplane arrangements.
	\newblock {\em J. Algebra}, 443:383--394, 2015.
	
	\bibitem{DST13}
	Marcin Dumnicki, Tomasz Szemberg, and Halszka Tutaj-Gasi\'nska.
	\newblock Counterexamples to the {$I^{(3)}\subset I^2$} containment.
	\newblock {\em J. Algebra}, 393:24--29, 2013.
	
	\bibitem{ELS01}
	Lawrence Ein, Robert Lazarsfeld, and Karen~E. Smith.
	\newblock Uniform bounds and symbolic powers on smooth varieties.
	\newblock {\em Invent. Math.}, 144(2):241--252, 2001.
	
	\bibitem{GHM13}
	A.~V. Geramita, B.~Harbourne, and J.~Migliore.
	\newblock Star configurations in {$\Bbb{P}^n$}.
	\newblock {\em J. Algebra}, 376:279--299, 2013.
	
	\bibitem{GHM17}
	A.~V. Geramita, B.~Harbourne, J.~Migliore, and U.~Nagel.
	\newblock Matroid configurations and symbolic powers of their ideals.
	\newblock {\em Trans. Amer. Math. Soc.}, 369(10):7049--7066, 2017.
	
	\bibitem{M2}
	Daniel~R. Grayson and Michael~E. Stillman.
	\newblock Macaulay2, a software system for research in algebraic geometry.
	\newblock Available at \url{http://www.math.uiuc.edu/Macaulay2/}.
	
	\bibitem{G18}
	Elo{\'\i}sa {Grifo}.
	\newblock {A stable version of Harbourne's Conjecture and the containment
		problem for space monomial curves}.
	\newblock {\em arXiv e-prints}, page arXiv:1809.06955, September 2018.
	
	\bibitem{GHM19}
	Elo{\'\i}sa {Grifo}, Craig {Huneke}, and Vivek {Mukundan}.
	\newblock {Expected resurgences and symbolic powers of ideals}.
	\newblock {\em arXiv e-prints}, page arXiv:1903.12122, March 2019.
	
	\bibitem{GHV13}
	Elena Guardo, Brian Harbourne, and Adam Van~Tuyl.
	\newblock Asymptotic resurgences for ideals of positive dimensional subschemes
	of projective space.
	\newblock {\em Adv. Math.}, 246:114--127, 2013.
	
	\bibitem{HH13}
	Brian Harbourne and Craig Huneke.
	\newblock Are symbolic powers highly evolved?
	\newblock {\em J. Ramanujan Math. Soc.}, 28A:247--266, 2013.
	
	\bibitem{HKZ20}
	Brian {Harbourne}, Jake {Kettinger}, and Frank {Zimmitti}.
	\newblock {Extreme values of the resurgence for homogeneous ideals in
		polynomial rings}.
	\newblock {\em arXiv e-prints}, page arXiv:2005.05282, May 2020.
	
	\bibitem{HHT07}
	J\"{u}rgen Herzog, Takayuki Hibi, and Ng\^{o}~Vi\^{e}t Trung.
	\newblock Symbolic powers of monomial ideals and vertex cover algebras.
	\newblock {\em Adv. Math.}, 210(1):304--322, 2007.
	
	\bibitem{HH02}
	Melvin Hochster and Craig Huneke.
	\newblock Comparison of symbolic and ordinary powers of ideals.
	\newblock {\em Invent. Math.}, 147(2):349--369, 2002.
	
	\bibitem{J14}
	Mark~R. Johnson.
	\newblock Containing symbolic powers in regular rings.
	\newblock {\em Comm. Algebra}, 42(8):3552--3557, 2014.
	
	\bibitem{LM15}
	Magdalena Lampa-Baczy\'{n}ska and Grzegorz Malara.
	\newblock On the containment hierarchy for simplicial ideals.
	\newblock {\em J. Pure Appl. Algebra}, 219(12):5402--5412, 2015.
	
	\bibitem{Lyu88}
	Gennady Lyubeznik.
	\newblock On the arithmetical rank of monomial ideals.
	\newblock {\em J. Algebra}, 112(1):86--89, 1988.
	
	\bibitem{MS18}
	Linquan Ma and Karl Schwede.
	\newblock Perfectoid multiplier/test ideals in regular rings and bounds on
	symbolic powers.
	\newblock {\em Invent. Math.}, 214(2):913--955, 2018.
	
	\bibitem{MG92}
	Mayumi Morimoto and Shiro Goto.
	\newblock Non-{C}ohen-{M}acaulay symbolic blow-ups for space monomial curves.
	\newblock {\em Proc. Amer. Math. Soc.}, 116(2):305--311, 1992.
	
	\bibitem{N59}
	Masayoshi Nagata.
	\newblock On the {$14$}-th problem of {H}ilbert.
	\newblock {\em Amer. J. Math.}, 81:766--772, 1959.
	
	\bibitem{IntegralClosure}
	Irena Swanson and Craig Huneke.
	\newblock {\em Integral closure of ideals, rings, and modules}, volume 336 of
	{\em London Mathematical Society Lecture Note Series}.
	\newblock Cambridge University Press, Cambridge, 2006.
	
\end{thebibliography}

\end{document}